\newtheorem{theorem}{\large Theorem}[section]
\newtheorem{lemma}{\large Lemma}[section]
\newtheorem{definition}{\large Definition}[section]
\newtheorem{remark}{\large Remark}[section]
\newtheorem{example}{\large Example}[section]
\def\1{\rule{0pt}{1.7ex}xy}
\def\2{\rule{0pt}{1.7ex}x_0x}
\def\3{\rule{0pt}{2ex}X_x}
\def\4{\rule{0pt}{1.7ex}1}
\def\5{\rule{0pt}{1.7ex}2}
\begin{document}

	\thispagestyle{empty}
	
	\title{Generalized Hukuhara directional differentiability of interval-valued functions on Riemannian manifolds}
	
	\author[a]{Hilal Ahmad Bhat}
	%\ead{bhathilal01@gmail.com}

	\author[b,*]{Akhlad Iqbal}%\corref{cor1}}
%\ead{akhlad6star@gmail.com}

\affil[a]{\it \small Department of Mathematics, Aligarh Muslim University, Aligarh, 202002, Uttar Pradesh, India}

\affil[b]{\it \small Department of Mathematics, Aligarh Muslim University, Aligarh, 202002, Uttar Pradesh, India}
\date{}

\maketitle

\let\thefootnote\relax\footnotetext{*corresponding author (Akhlad Iqbal)\\ Email addresses: bhathilal01@gmail.com (Hilal Ahmad Bhat),\\ akhlad6star@gmail.com (Akhlad Iqbal)}

%\footnotetext[1]{corresponding author (Akhlad Iqbal)\\ Email %addresses: bhathilal01@gmail.com (Hilal Ahmad Bhat),\\ %akhlad6star@gmail.com (Akhlad Iqbal)}

\begin{abstract}
	In this paper, we show that generalized Hukuhara directional differentiability of an interval-valued function (IVF) defined on Riemannian manifolds is not equivalent to the directional differentiability of its center and half-width functions and hence not to its end point functions. This contrasts with S.-L. Chen's \cite{chen} assertion which says the equivalence holds in terms of endpoint functions of an IVF which is defined on a Hadamard manifold. Additionally, the paper addresses some other inaccuracies which arise when assuming the convexity of a function at a single point in its domain. In light of these arguments, the paper presents some basic results that relate to both the convexity and directional differentiability of an IVF.
\end{abstract}

\section{Introduction}
The Karush-Kuhn-Tucker (KKT) conditions play a significant role in optimization programming problems. Kuhn and Tucker (1951) \cite{kuhn} originally formulated the conditions, which were subsequently referred to as the Kuhn-Tucker conditions. However, it was later discovered that Karush (1939) \cite{karush} had already presented these conditions in his Master's thesis. As a result, the conditions became known as the Karush-Kuhn-Tucker (KKT) conditions. For a last few decades, the authors have been developing these conditions for different areas of mathematics such as stochastic optimization programming (SOP), fuzzy optimization programming (FOP) and interval-valued optimization programming (IVOP). For the case of IVOP, one can refer to \cite{chalco-cano,chen,sadikur,bilal1,bilal2,wu1,wu2}. 
 
 Moreover, while developing KKT conditions for an IVOP problem, the authors \cite{chalco-cano,chen,bilal1,bilal2,wu1,wu2} have assumed the convexity of objective function and constraints mainly at a point. However, the properties enjoyed by a function which is convex at each point of its domain (convex) may not be inherited by a function which is convex only at a single point of its domain[\cite{bazara}, Page-145]. Nevertheless, Remark 2.1(i)*\let\thefootnote\relax\footnotetext{*Remark 2.1(i) in \cite{chen} is as following:\\ If a function $f:D \rightarrow \mathbb{R}$, defined on a nonempty open geodesic convex subset of a Hadamard manifold M, is convex at $x\in D$, then $f$ is directionally differentiable at $x$.} in \cite{chen} asserts that if a real-valued function defined on a nonempty open geodesic convex subset of a Hadamard manifold is convex at a point, then the function is necessarily directionally differentiable there at. However, the Remark is false and the counter example is given in Appendix A. Furthermore, Chen \cite{chen} claims that the gH directional differentiability at a point of an IVF defined on Hadamard manifold is equivalent to the directional differentiability of its endpoint functions. The claim lies in $\text{Lemma 3.2}^{\dagger}$\let\thefootnote\relax\footnotetext{$\dagger$Lemma 3.2 in \cite{chen} is as following:\\A function $f:D \rightarrow \mathbb{I}$, with $f(x) = [f^L(x), f^U(x)]$, is gH-directional differentiable at $x$ in the direction $v \in T_x(M)$ if and only if $f^L$ and $f^U$ are directional differentiable at $x$ in the direction of $v$.} in \cite{chen}. But, the Lemma is not necessarily true in general and the counter example is given in Appendix A. It is worth noting that these imprecise assertions i.e., Remark 2.1(i) and Lemma 3.2 in \cite{chen},  are frequently used in deducing some major conclusions in \cite{chen}.

In view of the above discussion, we show that generalized Hukuhara directional differentiability (gH-directional differentiability) of an IVF defined on Riemannian manifold is not equivalent to the directional differentiability of its center and half-width functions. Although, we show the equivalence holds for a particular class of functions. Furthermore, we study few basic properties of interval-valued convex function on Riemannian manifolds that are related to gH-directional differentiability. The focus is laid on the convexity of a function at a point. Throughout the paper, we will assume (M,g) as complete finite dimensional Riemannian manifold with Riemannian metric $g$ and Riemannian connection $\nabla$ on $M$.

\section{Preliminaries}
In this section, we recall the basic arithmetics of intervals.

Let $\mathbb{I}$ be the set of all closed and bounded intervals of $\mathbb{R}$. Let $A\in \mathbb{I}$, we write $A=[a^l,a^u]$ where $a^l$ and $a^u$ are lower and upper bounds of $A$ respectively. For $A,B \in \mathbb{I}$ and $k \in \mathbb{R}$, we have
$$A+B=[a^l+b^l,a^u+b^u]$$
$$kA=\begin{cases}
	[ka^l,ka^u], & k \geq 0;\\
	[ka^u,ka^l], & k<0.
\end{cases}$$
From the above two expressions, one has
$$-A=[-a^u,-a^l] ~~ \text{and}~~ A-B=[a^l-b^u,a^u-b^l]$$
The Hausdorff distance between $A$ and $B$ is 
\begin{equation} \label{metric,p2}
	d_H(A,B) =\text{max}\{|a^l-b^l|,|a^u-b^u|\}.
\end{equation}
For more details, we refer to \cite{alefeld,moore2,wu1}.

We can also represent an interval $A\in \mathbb{I}$ in terms of its center and half width (radius) as
\begin{equation} \label{eq1,p2}
	A=\langle a^c,a^w \rangle,
\end{equation}
where $a^c=\frac{a^l+a^u}{2}$ and $a^w=\frac{a^u-a^l}{2}$ are respectively the center and half-width of $A$. Throughout the paper, we will consider the representation (\ref{eq1,p2}) of an interval $A\in \mathbb{I}$. 

The order relation between two intervals in $\mathbb{I}$ is a partial order given by
\begin{equation}\label{eq2,p2}
	A \preceq_{lu} B \text{~~if and only if~ } a^l\leq b^l ~ \text{\&} ~ a^u\leq b^u.
\end{equation}
But, the order relation (\ref{eq2,p2}) in $\mathbb{I}$ is not a total order relation meaning that any two intervals in $\mathbb{I}$ are not comparable. For example, choose $A=[1,4]$ and $B=[2,3]$ then $a^l<b^l$ but $a^u> b^u$. which implies $A$ and $B$ are not comparable with respect to order relation \ref{eq2,p2}. Hence, it is not a total order relation.\\
In view of the above discussion, Bhunia et. al. \cite{bhunia} proposed the following order relations for a minimization problem:

%\begin{itemize}
%	\item [i)] {\large Minimization IVOP problem}\\
	For any $A,B \in \mathbb{I}$ with $A=\langle a^c,a^w\rangle$ and $B=\langle b^c,b^w\rangle$, we say $A$ is superior (or more preferable) to $B$ in a minimization problem if and only if center of $A$ is strictly less than center of B and half-width (radius), which measures uncertainty (or inexactness), of $A$ is less than or equal to $B$ i.e.,
	\begin{equation} \label{orderrelation,p2}
		A \leq^{\text{min}} B \text{~~if and only if~~} \begin{cases}
			a^c<b^c, & a^c \neq b^c;\\
			a^w\leq b^w, & a^c=b^c.
		\end{cases}
	\end{equation}
	$$~~A <^{\text{min}} B \text{~~if and only if~~} A \leq^{\text{min}} B \text{~and~} A\neq B.$$
	
%	\item [ii)] {\large Maximization IVOP problem}\\
%	Similar to minimization IVOP problem, the order relations in case of maximization IVOP are given by
%	\begin{equation} \label{eq4,p2}
%		A \geq^{\text{max}} B \Longleftrightarrow \begin{cases}
%			a^c>b^c, & a^c \neq b^c;\\
%			a^w\leq b^w, & a^c=b^c.
%		\end{cases}
%	\end{equation}
%	$$~~A >^{\text{max}} B \Longleftrightarrow A \leq^{\text{max}} B \text{~and~} A\neq B.$$
%\end{itemize} 
One can easily verify that the relation given by expression (\ref{orderrelation,p2}) is a total order relation. Through out the paper, we will consider the order relation in $\mathbb{I}$ given by expression (\ref{orderrelation,p2}).\vspace{0.15cm}

%Next, we have the following lemma that will be used in sequel.

\begin{lemma} \label{lemma4,p2}
	For any $A,B\in \mathbb{I}$ with $A=\langle a^c,a^w\rangle$ and $B=\langle b^c,b^w\rangle$, and any $\alpha, \beta \in \mathbb{R}$, we have
	$$\alpha A + \beta B =\langle\alpha a^c + \beta b^c, |\alpha|a^w +|\beta|b^w\rangle.$$
\end{lemma}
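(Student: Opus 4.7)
The plan is to verify the identity by treating scalar multiplication and addition separately in the center–half-width representation, and then combining them. The only subtlety is the absolute value appearing on the half-width, which arises from the sign-case in the definition of $kA$.

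First I would establish a scalar-multiplication sub-lemma: for any $\alpha \in \mathbb{R}$ and $A = \langle a^c, a^w \rangle \in \mathbb{I}$, one has $\alpha A = \langle \alpha a^c, |\alpha| a^w \rangle$. This is a two-case computation from the definition of $kA$ given earlier in the preliminaries. If $\alpha \geq 0$, then $\alpha A = [\alpha a^l, \alpha a^u]$, whose center is $\tfrac{1}{2}(\alpha a^l + \alpha a^u) = \alpha a^c$ and whose half-width is $\tfrac{1}{2}(\alpha a^u - \alpha a^l) = \alpha a^w = |\alpha| a^w$. If $\alpha < 0$, then $\alpha A = [\alpha a^u, \alpha a^l]$, and an analogous computation yields center $\alpha a^c$ and half-width $\tfrac{1}{2}(\alpha a^l - \alpha a^u) = -\alpha a^w = |\alpha| a^w$. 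The same statement then holds for $\beta B$.

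Next I would record the additivity formula $A' + B' = \langle (a')^c + (b')^c,\, (a')^w + (b')^w\rangle$ for any two intervals $A', B' \in \mathbb{I}$. This follows directly from $A' + B' = [(a')^l + (b')^l,\, (a')^u + (b')^u]$ by taking half-sums and half-differences of the endpoints; no case split is required here.

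Finally I would apply the additivity formula to the two intervals $\alpha A$ and $\beta B$ computed in the first step. The center becomes $\alpha a^c + \beta b^c$ and the half-width becomes $|\alpha| a^w + |\beta| b^w$, which is exactly the asserted identity. There is no genuine obstacle; the only thing to be careful about is not dropping the absolute values when $\alpha$ or $\beta$ is negative, since that is precisely the point where the naive linearity of the endpoints fails and the center–half-width representation stays clean.
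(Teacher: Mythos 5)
Your proof is correct. It reaches the same identity by the same underlying endpoint computations as the paper, but you organize them differently: you factor the claim into a scalar-multiplication sub-lemma ($\alpha A = \langle \alpha a^c, |\alpha| a^w\rangle$, two sign cases) and a case-free additivity formula for the sum of two intervals, then compose the two. The paper instead performs a joint case analysis on the signs of $\alpha$ and $\beta$, writing out two of the four cases explicitly and declaring the remaining two similar. Your modular route is slightly more economical --- it replaces four joint cases with two cases plus one uniform step, and the scalar-multiplication formula is independently reusable --- while the paper's version has the minor virtue of displaying the full interval $[\alpha a^l + \beta b^l, \alpha a^u + \beta b^u]$ (and its sign-permuted variants) explicitly in each case. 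Both are complete; you correctly identify the absolute value on the half-width as the only point where care is needed.
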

\begin{proof}
	Considering different possibilities of $\alpha$ and $\beta$, a total of four cases arise:\\
	{Case (1)} $\alpha,~ \beta \geq 0$, then
	$$\alpha A + \beta B = [\alpha a^l + \beta b^l,~ \alpha a^u + \beta b^u],$$
	which gives the center and half width of $\alpha A + \beta B$ as follows:
	$$(\alpha A + \beta B)^c = \frac{(\alpha a^l + \beta b^l) + (\alpha a^u + \beta b^u)}{2}=\alpha a^c + \beta b^c,$$
	$$\text{and}~~~ (\alpha A + \beta B)^w = \frac{(\alpha a^u + \beta b^u) - (\alpha a^l + \beta b^l)}{2}=|\alpha| a^w + |\beta| b^w.$$
	$$i.e., ~~~ \alpha A + \beta B = \langle\alpha a^c + \beta b^c,~|\alpha| a^w + |\beta| b^w\rangle.$$
	{Case (2)} $\alpha \geq 0$ and $\beta \leq 0$, then
	$$\alpha A + \beta B = [\alpha a^l + \beta b^u,~ \alpha a^u + \beta b^l],$$
	which gives the center and half width of $\alpha A + \beta B$ as follows:
	$$(\alpha A + \beta B)^c = \frac{(\alpha a^l + \beta b^u) + (\alpha a^u + \beta b^l)}{2}=\alpha a^c + \beta b^c,$$
	$$\text{and}~~~ (\alpha A + \beta B)^w = \frac{(\alpha a^u + \beta b^l) - (\alpha a^l + \beta b^u)}{2}= \alpha a^w - \beta b^w = |\alpha| a^w + |\beta| b^w.$$
	$$i.e., ~~~ \alpha A + \beta B = \langle\alpha a^c + \beta b^c,~|\alpha| a^w + |\beta| b^w\rangle.$$
	The following cases are now similar to above two cases:\\
	{Case (3)} $\alpha \leq 0$ and $\beta \geq 0$.\\
	{(Case 4)} $\alpha, ~ \beta \leq 0$.
\end{proof}
\vspace{0.15cm}

 A function $f:E \rightarrow \mathbb{I}$ defined on a subset $E\subseteq (M,g)$ is called an interval-valued function (IVF) denoted by $f(x)=\langle f^c(x),f^w(x)\rangle$, where $f^c(x)$ (center function) and $f^w(x)$ (half-width or radius function) are both real-valued functions defined on $E$, and satisfies $f^w(x) \geq0 ~ \forall x \in E.$ \vspace{0.2cm}

\begin{definition}
	\rm An IVF $f:E \rightarrow \mathbb{I}$, defined on a nonempty subset $E\subseteq \mathbb{R}$, is said to be non-decreasing (increasing) if for any $x,y \in E$, we have 
	$$x<y ~~ \Rightarrow ~ f(x)\leq^{\text{min}} f(y) ~ (f(x)<^{\text{min}} f(y)).$$
\end{definition}

One can similarly define non-increasing and decreasing function.\vspace{0.15cm}

\noindent Since the order relation (\ref{orderrelation,p2}) is a total order relation, we can define infimum and supremum of a subset $\mathbb{A}\subseteq \mathbb{I}$.

\begin{definition}
	\rm We define the infimum and supremum of a subset $\mathbb{A} \subseteq \mathbb{I}$ as follows:
	\begin{itemize}
		\item [\textbullet] A set $\mathbb{A}\subseteq \mathbb{I}$ is bounded below (bounded above) if there exists an interval $B\in \mathbb{I}$ such that
	$$B \leq^{\text{min}} A ~ (A \leq^{\text{min}} B),~~ \text{for all}~~ A \in \mathbb{A}.$$
	The interval $B$ is called lower bound (upper bound) of $\mathbb{A}$. In case no such $B$ exists, the set $\mathbb{A}$ is said to be unbounded below (unbounded above).
	\item [\textbullet] A set $\mathbb{A}$ is bounded if it is bounded above as well as bounded below.
	\item [\textbullet] A member $B$ of a set $\mathbb{A} \subseteq \mathbb{I}$ is least (greatest) if 
	\begin{itemize}
		\item [i)] $B\in \mathbb{A}$
		\item [ii)] $B \leq^{\text{min}}A ~(A \leq^{\text{min}} B) ~~ \forall ~ A \in \mathbb{A}.$
	\end{itemize}
\item [\textbullet] An infimum (supremum) $M \in \mathbb{I}$ of a set $\mathbb{A}\subseteq \mathbb{I}$ is the greatest lower bound (least upper bound) if exists, and we write 
$$M=\text{inf}~ \mathbb{A}~(M=\text{sup} ~\mathbb{A}).$$
	\end{itemize}
\end{definition}

\begin{remark}
\rm	The infimum or supremum of a set $\mathbb{A} \subseteq \mathbb{I}$ may or may not exist. In case it exists, it may or may not belong to the set.
\end{remark}

\begin{example}
  \rm	Consider $\mathbb{A}=\{\langle x,1\rangle \in \mathbb{I}: x \in \mathbb{N}\}$ and $\mathbb{B}=\{\langle x,x+1\rangle \in \mathbb{I}: x \in [0,1)\}$. Then
  \begin{itemize}
  	\item [\textbullet] $\mathbb{A}$ is bounded below but not bounded above, and inf $\mathbb{A}$ = $\langle 1,1\rangle $ $\in \mathbb{A}$;
  	\item [\textbullet] $\mathbb{B}$ is bounded, and inf $\mathbb{A}$ = $\langle 0,1\rangle$ $\in \mathbb{A}$ but sup $\mathbb{B} = \langle 1,2\rangle$ doesn't belong to $\mathbb{B}$.
  \end{itemize} 
\end{example}

\begin{remark}\label{remark2,p2}
	\rm The order completeness property (or least upper bound property) in the totally ordered set $(\mathbb{I},\leq^{\text{min}})$ doesn't hold, meaning that not every nonempty subset of $\mathbb{I}$ which is bounded above has a supremum. For example, consider the following set:
	$$\mathbb{A} = \left\{\langle 0, ~x \rangle \in \mathbb{I} ~:~x\in \mathbb{R}_{+},~ x\geq 0 \right\}.$$
	Then $\mathbb{A}$ is bounded above but has no supremum. Hence, the order relation $\leq^{\text{min}}$ is not complete.
\end{remark}

We now present the definition of limit of an IVF at a point. 
\begin{definition} \label{deflimit,p2}
\rm	Suppose that an IVF $f(x)=\langle f^c(x),f^w(x)\rangle$ be defined on a neighborhood of $x_0 \in (M,g)$ except possibly at $x_0$. We say that $f(x)$ has limit $B\in \mathbb{I}$ at $x_0$ if for every $\epsilon >0$  there exists $\delta>0$ such that 
	$$d(x,x_0) < \delta~~\Rightarrow d_H(f(x),B)<\epsilon,$$ 
	and we write 
	$$\lim\limits_{x\rightarrow x_0} f(x)=B.$$
\end{definition}

The following lemma assures that the limit of an IVF function at a point of $(M,g)$ is equivalent to the limit of its center and half-width (radius) function at that point.

\begin{lemma} \label{lemmalimit,p2}
	Suppose that an IVF $f(x)=\langle f^c(x),f^w(x)\rangle$ be defined on a neighborhood of $x_0 \in (M,g)$ except possibly at $x_0$. Let $B\in \mathbb{I}$ with $B=\langle b^c,b^w\rangle$ be an interval in $\mathbb{I}$, then we have
	$$\lim\limits_{x\rightarrow x_0} f(x)=B ~ \Longleftrightarrow ~ \begin{cases}
		\lim\limits_{x\rightarrow x_0}f^c(x)=b^c;\\
		\lim\limits_{x\rightarrow x_0}f^w(x)=b^w.
	\end{cases}$$
\end{lemma}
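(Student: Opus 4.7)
The plan is to reduce the statement to the elementary identity
\[
d_H(A,B) \;=\; |a^c - b^c| \;+\; |a^w - b^w|,
\]
after which both implications follow from the standard $\varepsilon$--$\delta$ machinery already in play for the real-valued limits on $(M,g)$.

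First I would establish the displayed identity. Using $a^l = a^c - a^w$ and $a^u = a^c + a^w$ (and similarly for $B$), I rewrite
\[
a^l - b^l \;=\; (a^c - b^c) - (a^w - b^w), \qquad a^u - b^u \;=\; (a^c - b^c) + (a^w - b^w).
\]
Setting $p = a^c - b^c$ and $q = a^w - b^w$, the definition (\ref{metric,p2}) of the Hausdorff distance gives $d_H(A,B) = \max\{|p-q|,\, |p+q|\}$, and the elementary observation $\max\{|p-q|,|p+q|\} = |p|+|q|$ (checked by splitting into sign cases of $p$ and $q$) yields the identity. This is the one technical step, but it is purely algebraic.

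Next, the forward direction. Assuming $\lim_{x \to x_0} f(x) = B$ in the sense of Definition \ref{deflimit,p2}, given $\varepsilon > 0$ I pick $\delta > 0$ with $d(x,x_0) < \delta \Rightarrow d_H(f(x),B) < \varepsilon$. The identity then gives
\[
|f^c(x) - b^c| \;\le\; d_H(f(x),B) \;<\; \varepsilon \quad\text{and}\quad |f^w(x) - b^w| \;\le\; d_H(f(x),B) \;<\; \varepsilon,
\]
so the same $\delta$ witnesses both real-valued limits $\lim f^c(x) = b^c$ and $\lim f^w(x) = b^w$.

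For the converse, given $\varepsilon > 0$ I apply the two real-valued limits with tolerance $\varepsilon/2$ to obtain $\delta_1, \delta_2 > 0$, and set $\delta = \min\{\delta_1,\delta_2\}$. For $d(x,x_0) < \delta$ the identity yields
\[
d_H(f(x),B) \;=\; |f^c(x) - b^c| + |f^w(x) - b^w| \;<\; \tfrac{\varepsilon}{2} + \tfrac{\varepsilon}{2} \;=\; \varepsilon,
\]
which is the required condition. The only subtlety worth flagging is that the proof depends on $(M,g)$ only through the existence of the distance function $d$ governing the domain; the target-side estimates are entirely interval-arithmetic, so the Riemannian structure plays no essential role beyond providing a sensible meaning for neighbourhoods of $x_0$.
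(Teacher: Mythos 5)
Your proposal is correct. The paper itself dismisses this lemma with ``The proof is obvious,'' so there is no argument to compare against; what you have written is the standard proof one would expect to be behind that remark. Your key identity $d_H(A,B)=\max\{|p-q|,|p+q|\}=|p|+|q|$ with $p=a^c-b^c$, $q=a^w-b^w$ is a correct consequence of the definition (\ref{metric,p2}) and of $a^l=a^c-a^w$, $a^u=a^c+a^w$, and both directions of the $\varepsilon$--$\delta$ argument then go through exactly as you state. (For the forward direction you do not even need the full identity: the cruder bounds $|p|\le d_H(A,B)$ and $|q|\le d_H(A,B)$, which follow since $2|p|\le|p-q|+|p+q|\le 2\max\{|p-q|,|p+q|\}$ and likewise for $q$, already suffice; but your version is cleaner and also handles the converse in one stroke.) Your closing remark that the Riemannian structure enters only through the distance function $d$ on the domain is accurate.
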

\begin{proof}
       The proof is obvious.
\end{proof}

Next, we give the notions of convexity of a real-valued function and an IVF.

\begin{definition} \rm
	\cite{udriste} A subset $E \in (M,g)$ is {totally convex} if $E$ contains every geodesic $\gamma_{\1}$ of $M$ whose end points $x$ and $y$ are in $E$.
\end{definition} 

\begin{definition} \rm 
	\cite{udriste} Let $E$ be totally convex set in $(M,g)$ and $f:E\rightarrow\mathbb{R}$ be a real-valued function. Then:
	\begin{itemize}
		\item [1)] $f$ is convex on $E$, if
		$$f(\gamma_{\1}(s)) \leq (1-s)f(x) +sf(y), ~~~ \forall~ x,y \in E,~~ \gamma_{\1}\in \Gamma,~~\forall~s\in[0,1],$$
		where $\Gamma$ is the collection of all geodesics joining $x$ and $y$.
		\item [2)] $f$ is strictly convex on $E$, if
		$$f(\gamma_{\1}(s)) < (1-s)f(x) +sf(y), ~~~ \forall~ x,y \in E, ~~x\neq y,~~ \gamma_{\1}\in \Gamma,~~\forall~s\in(0,1).$$
	\end{itemize}
\end{definition}

\begin{definition} \rm 
	An IVF $f:E\rightarrow\mathbb{I}$ with $f(x)=\langle f^c(x),~f^w(x)\rangle$, defined on a totally convex set $E\subseteq (M,g)$, is cw-convex (strictly cw-convex) on $E$ if $f^c$ and $f^w$ are convex (strictly convex) on $E$.
\end{definition}

\begin{definition}
	\rm \cite{udriste} A set $E\subseteq (M,g)$ is star-shaped at $x_0 \in E$ if $\gamma_{\2}(s)\in E$ whenever $x\in E$ and $s \in (0,1)$, where $\gamma_{\2}$ is any geodesic in $E$ joining $x_0$ with $x$.
\end{definition}

\begin{definition} \rm 
	\cite{udriste} Let $E \subseteq (M,g)$ be star-shaped at $x_0 \in E$ and $f:E\rightarrow\mathbb{R}$ be a real-valued function. Then:
	\begin{itemize}
		\item [1)] $f$ is convex at $x_0$, if
		$$f(\gamma_{\2}(s)) \leq (1-s)f(x_0) +sf(x), ~~~ \forall~ x \in E,~~ \gamma_{\2}\in \Gamma_0,~~\forall~s\in(0,1),$$
		where $\Gamma_0$ is the collection of all geodesics emanating from $x_0$ and terminating at $x$.
		\item [2)] $f$ is strictly convex at $x_0$, if
		$$f(\gamma_{\2}(s)) < (1-s)f(x_0) +sf(x), ~~~ \forall~ x \in E, ~~x\neq x_0,~~ \gamma_{\2}\in \Gamma_0,~~\forall~s\in(0,1).$$
	\end{itemize}
\end{definition}

\begin{definition} \label{definition**,p2}
\rm	Let $E\subseteq (M,g)$ be star-shaped at $x_0 \in E$. We say an IVF $f:E\rightarrow \mathbb{I}$ with $f(x)=\langle f^c(x),~f^w(x)\rangle$ is cw-convex at $x_0$, if $f^c$ and $f^w$ are convex at $x_0$.
\end{definition}

\section{Directional differentiability of a real-valued function}
   ~~~In this section, we present the concept of directional differentiability for a real-valued function and some of the basic results related to both convexity and directional differentiability of a function. Furthermore, we give the counter examples to those arguments that remain invalid when the convexity is assumed at a single point of the domain. We extend these results for an IVF in the next section.

 If $E \subseteq M$ is totally convex, then $T_x(E)$, the set of tangent vectors to $E$ at any $x\in E$, is a convex cone. Furthermore, if $E$ is star-shaped at some $x_0\in E$, then $T_{x_0}(E)$, the set of tangent vectors to $E$ at $x_0$, is star-shaped at $0$.\vspace{0.15cm}

\begin{definition} \label{definition5.1,p2} \rm 
	Let $E$ be a subset of $(M,g)$ and $x\in E$. Let $X_x \in T_x(E)$ and $\gamma(s);~s\in I, ~0\in I~ \text{and}~ \gamma(I)\subseteq E$, be a geodesic for which $\gamma(0)=x, ~ \&~ \dot{\gamma}(0)=X_x$. We say a real-valued function $f:E \rightarrow \mathbb{R}$ is directionally differentiable at $x$ in the direction $X_x$, if the limit
	$$Df(x;~X_x)=\lim\limits_{s\rightarrow0^+} \frac{f(\gamma(s))-f(x)}{s}$$
	exists, where $Df(x;X_x)$ is said to be directional derivative of $f$ at $x$ in the direction $X_x$.  Moreover, we say $f$ is directionally differentiable at $x$, if $Df(x;X_x)$ exists at $x$ in every direction $X_x \in T_x(E)$. Furthermore, if $Df(x;X_x)$ exists at each $x\in E$ and in every direction $X_x \in T_x(E)$, we say $f$ is directionally differentiable on $E$.
\end{definition}

\begin{example}
	\rm Let $M= \{(x,y,z): x^2+y^2=1, z\in \mathbb{R}\}\subseteq \mathbb{R}^3$ be the cylinder of radius 1 along Z-axis. Let $x_0=(1,0,0)$, and $E=E_1 \cup E_2$ where,
	 \begin{align*}
	 	E_1&=\{(x,y,z)\in M: x=1, y=0, z\in \mathbb{R}\}\\
	 	\text{and} ~~~E_2&=\{(x,y,z)\in M: x^2+y^2=1, z=0\}.
	 \end{align*}
	Then $T_{x_0}(E)$ is the union of two orthogonal lines of $\mathbb{R}^2$ passing through origin. For the sake of simplicity, we can choose 
	$$T_{x_0}(E)=\{(0,v_1,v_2)\in \mathbb{R}^3: \text{~either}~ v_1=0 ~ \text{or} ~v_2=0\},$$
	 which is the union of $Y$-axis and $Z$-axis. Clearly, $T_{x_0}(E)$ is star-shaped at $(0,0)$. 
	 
	 Define a function $f: E \rightarrow\mathbb{R}$ as follows:
	 $$f(x,y,z) = \begin{cases}
	 	z^2+z+1, & (x,y,z)\in E_1\\
	 	e^{1-x^2}, & (x,y,z)\in E_2.
	 \end{cases}$$
 The geodesics emanating from $x_0=(1,0,0)$ in any direction $X_{x_0}=(0,v_1,v_2) \in T_{x_0}(E)$ are given by
 	\begin{align*}
 		\gamma_{\4}(s) &= (1,0,sv_2) \in E_1, ~~ s\in I, ~v_1=0,
 	\intertext{where $I$ is interval in $\mathbb{R}$ containing $0$ and $\gamma_{\4}(I) \subseteq E_1$, and} 
 	\gamma_{\5}(s) &= (\cos(v_1s), \sin (v_1s),0) \in E_2, ~~ s\in \left[0,\frac{2\pi}{|v_1|}\right], v_2=0, v_1\neq 0.
 	\end{align*}
 The directional derivative of $f$ at $x_0=(1,0,0)$ in any direction $T_{x_0}(E)$ is given by
 $$Df(x_0;~X_{x_0})) = \begin{cases}
 	v_2, &v_1 = 0;\\
 	0, & v_1\neq 0,~v_2=0.
 \end{cases}$$
\end{example}

In view of Definition \ref{definition5.1,p2}, we have the following lemma.

\begin{lemma} \label{lemmaf+g,p2}
	Suppose that $f,g:E\rightarrow \mathbb{R}$, $E\subseteq (M,g)$, be directionally differentiable at $x\in E$ in the direction $X_x\in T_x(E)$. Then $\alpha_1 f\pm \alpha_2 g$, $\alpha_1$, $\alpha_2$ $\in$ $\mathbb{R}$, is directionally differentiable at $x\in E$ in the direction $X_x\in T_x(E)$.
\end{lemma}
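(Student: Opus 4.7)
The plan is to reduce the claim to linearity of ordinary one-sided limits of real-valued functions of one variable. Since a geodesic on a Riemannian manifold is uniquely determined (at least locally) by its initial point and initial velocity, given $X_x \in T_x(E)$ there is essentially one geodesic $\gamma$ with $\gamma(0)=x$ and $\dot\gamma(0)=X_x$ that Definition \ref{definition5.1,p2} refers to; the same $\gamma$ serves both $f$ and $g$. I would therefore fix such a $\gamma$ once and for all and work on the interval $I$ where $\gamma(I)\subseteq E$.

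Next, for each sufficiently small $s>0$, I would compute the difference quotient algebraically:
$$\frac{(\alpha_1 f \pm \alpha_2 g)(\gamma(s)) - (\alpha_1 f \pm \alpha_2 g)(x)}{s} = \alpha_1 \cdot \frac{f(\gamma(s))-f(x)}{s} \pm \alpha_2 \cdot \frac{g(\gamma(s))-g(x)}{s}.$$
By hypothesis, as $s \to 0^+$ the two quotients on the right converge to $Df(x;X_x)$ and $Dg(x;X_x)$ respectively. Invoking the standard algebra-of-limits theorem for real functions of a real variable, the left-hand side converges and the limit equals $\alpha_1 Df(x;X_x) \pm \alpha_2 Dg(x;X_x)$. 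This gives both the existence of the limit defining $D(\alpha_1 f \pm \alpha_2 g)(x;X_x)$ and an explicit formula for it.

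There is no genuine obstacle here: the result is really just linearity of limits packaged in the language of Riemannian geometry. The only point worth being careful about is ensuring that a single geodesic is used simultaneously for $f$ and $g$, which is guaranteed by the uniqueness of geodesics with prescribed initial conditions, and that $s$ is restricted so that $\gamma(s)$ stays in $E$ (this follows from taking the intersection of the admissible parameter intervals for $f$ and $g$, both of which contain a right-neighborhood of $0$).
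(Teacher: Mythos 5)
Your proof is correct and is exactly the argument the paper has in mind: the paper simply states ``The proof is obvious,'' and the intended content is the linear decomposition of the difference quotient followed by the algebra of limits, as you wrote. Your added care about using a single geodesic (unique locally by the initial conditions $\gamma(0)=x$, $\dot\gamma(0)=X_x$) and restricting $s$ to a common right-neighborhood of $0$ is sound and fills in the only details worth mentioning.
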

\begin{proof}
The proof is obvious.
\end{proof}

In view of Definition \ref{definition5.1,p2} and Definition 4.1 (Chapter 3 in \cite{udriste}), the following theorem follows directly from Theorem 4.2 (Chapter 3 in \cite{udriste}).

\begin{theorem} \label{theorem5.1,p2}
	If $f: E \rightarrow \mathbb{R}$ is convex on $E$ and $\gamma(s); ~s\in I, ~0\in I~ \&~ \gamma(I)\subseteq E$, is the geodesic for which $\gamma(0)=x~ \text{and} ~ \dot{\gamma}(0)=X_x$, then
	\begin{itemize}
		\item [(i)] for a fixed $X_x$, the function $Q:I\cap (0,\infty) \rightarrow \mathbb{R}$ defined by
		$$Q(s)=\frac{f(\gamma(s))-f(x)}{s},$$
		is non-decreasing;
		\item[(ii)] $Df(x;~X_{x})$ exists and $Df(x;~X_{x})= \underset{s}{\text{inf}} ~Q(s)$;
		\item [(iii)] $Df(x;~X_{x})$ is convex and positively homogeneous
		$$i.e., ~~ Df(x;~\lambda X_{x})= \lambda Df(x;~X_{x}), ~~ for ~~ \lambda\geq 0;$$
		\item[(iv)] $Df(x;~0)=0$ and -$Df(x;-X_{x})\leq$ $Df(x;~X_{x}).$
	\end{itemize}
\end{theorem}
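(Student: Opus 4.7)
The plan is to prove the four parts of the theorem in the order stated, since each later part relies on the earlier ones, and to reduce everything to the one-variable convex function $\varphi := f \circ \gamma : I \to \mathbb{R}$, which is convex because $f$ is convex along geodesics in $E$. This reduction works because all four assertions concern the behavior of $f$ in the direction $X_x$, and a single geodesic $\gamma$ suffices to encode that.

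For (i), the idea is to apply the classical three-chord (slope) inequality to $\varphi$. For $0 < s_1 < s_2$ in $I$, write $s_1 = \tfrac{s_1}{s_2}\,s_2 + \bigl(1-\tfrac{s_1}{s_2}\bigr)\,0$ and use convexity of $\varphi$ to obtain $\varphi(s_1) - \varphi(0) \leq \tfrac{s_1}{s_2}\bigl(\varphi(s_2) - \varphi(0)\bigr)$. Dividing by $s_1$ yields $Q(s_1) \leq Q(s_2)$. Part (ii) then follows immediately from (i): a non-decreasing function on $(0,\infty) \cap I$ has a limit as $s \to 0^+$ equal to its infimum (in $\mathbb{R} \cup \{-\infty\}$), and finiteness will be justified by bounding $Q(s)$ from below using a comparison with the reverse geodesic segment in $E$ (this is where convexity of $f$ on the whole set $E$, not just at a point, is essential).

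For (iii), positive homogeneity is straightforward: for $\lambda \geq 0$, the reparametrized geodesic $\tilde\gamma(s) := \gamma(\lambda s)$ satisfies $\tilde\gamma(0) = x$ and $\dot{\tilde\gamma}(0) = \lambda X_x$, and a change of variable $t = \lambda s$ in the defining limit immediately gives $Df(x;\lambda X_x) = \lambda\,Df(x;X_x)$. The main obstacle will be establishing convexity (equivalently, subadditivity) of $Df(x;\cdot)$ on $T_x(E)$, because on a manifold the sum of two tangent vectors does not correspond in any simple algebraic way to a composition of two geodesics issued from $x$ — unlike the Euclidean identity $x + s(v+w) = (x+sv) + sw$. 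My plan is to exploit that directional derivatives depend only on first-order data at $x$: using a normal chart centered at $x$, the geodesic with initial velocity $\alpha X_x + (1-\alpha) Y_x$ agrees with the Euclidean segment to first order, and the convexity inequality for $\varphi$ along the three relevant geodesics transfers back to give the sublinearity bound.

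Finally for (iv), the first assertion $Df(x;0)=0$ is immediate since the geodesic with zero initial velocity is the constant curve $\gamma(s) \equiv x$, so $Q(s) \equiv 0$. The inequality $-Df(x;-X_x) \leq Df(x;X_x)$ then drops out of the sublinearity established in (iii): apply subadditivity to the decomposition $0 = X_x + (-X_x)$ and use $Df(x;0)=0$ to conclude $0 \leq Df(x;X_x) + Df(x;-X_x)$. I expect the only genuinely nontrivial step to be the subadditivity in (iii); (i), (ii), and (iv) should follow along the standard Euclidean template once the reduction to $\varphi = f\circ\gamma$ is in place.
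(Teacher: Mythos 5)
First, note that the paper itself does not prove this theorem: it states that the result ``follows directly from Theorem 4.2 (Chapter 3 in \cite{udriste})'', so your attempt is being compared against a citation rather than an argument. Your reduction to the one-variable convex function $\varphi=f\circ\gamma$ and the three-chord inequality correctly delivers (i), the existence-as-infimum part of (ii), positive homogeneity in (iii), and $Df(x;0)=0$. For (iv) you do not need the subadditivity of (iii) at all: the three-chord inequality for $\varphi$ across $s=0$ gives $-\frac{\varphi(-s)-\varphi(0)}{s}\le\frac{\varphi(s)-\varphi(0)}{s}$ directly, and letting $s\to0^+$ yields $-Df(x;-X_x)\le Df(x;X_x)$; the same left-hand quotients are what bound $Q$ below and give the finiteness in (ii). Be aware that both of these steps require the geodesic to extend to negative parameters inside $E$ (equivalently, that $-X_x$ is admissible); with only $\gamma(I)\subseteq E$, $0\in I$, finiteness can fail at boundary points (e.g.\ $f(t)=-\sqrt{t}$ on $E=[0,1]$ has $Df(0;1)=-\infty$), so you should state the interiority hypothesis you are implicitly using.

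The genuine gap is the convexity of $v\mapsto Df(x;v)$ in (iii). Your plan --- in a normal chart the geodesic with velocity $\alpha X_x+(1-\alpha)Y_x$ ``agrees with the Euclidean segment to first order,'' so the convexity inequality ``transfers back'' --- conflates closeness of \emph{points} with closeness of \emph{values}. Convexity of $f$ gives you $f(\sigma_s(\alpha))\le\alpha f(\exp_x(sX_x))+(1-\alpha)f(\exp_x(sY_x))$ at the point $\sigma_s(\alpha)$ of the $M$-geodesic joining $\exp_x(sX_x)$ and $\exp_x(sY_x)$, whereas the difference quotient you need is evaluated at $\exp_x(s(\alpha X_x+(1-\alpha)Y_x))$. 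These two points differ by $O(s^2)$, so after dividing by $s$ the discrepancy in function values is $o(1)$ only if you also know a local Lipschitz estimate $|f(p)-f(q)|\le L\,d(p,q)$ near $x$. That estimate does hold for convex functions on the interior of a totally convex set, but it is a separate theorem that must be invoked or proved; it is not a consequence of first-order agreement of the curves, and without it the transfer step does not go through. Once the Lipschitz lemma is added, subadditivity follows and, combined with your positive homogeneity argument (which is fine), gives the convexity claimed in (iii).
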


	In Theorem \ref{theorem5.1,p2}, part (ii) may not hold true if we assume the convexity of $f$ at a single point. For counter example one can refer to Example \ref{example6.1,p2} in Appendix A. Hence, we have the following Theorem:
	
	\begin{theorem} \label{theorem16*,p2}
		Let $E\subseteq (M,g)$ be star-shaped at $x_0 \in E$ and $\gamma(s);~s\in I, ~0\in I~ \&~ \gamma(I)\subseteq E$, be the geodesic for which $\gamma(0)=x_0, ~ \&~ \dot{\gamma}(0)=X_{x_0}$. If $f: E \rightarrow \mathbb{R}$ is convex at $x_0$, then
		\begin{itemize}
			\item [(i)] for a fixed $X_{x_0} \in T_{x_0}(E)$, the function $Q:I\cap(0,\infty) \rightarrow \mathbb{R}$, defined by
			$$Q(s)=\frac{f(\gamma(s))-f(x_0)}{s},$$
			is non-decreasing;
			\item [(ii)] if $Df(x_0;~X_{x_0})$ exists, then 
			\begin{itemize}
				\item [(a)] $Df(x_0;~X_{x_0})= \underset{s}{\text{inf}} ~Q(s)$;
				\item [(b)] the real-valued function $g(X_{x_0}) = Df(x_0;~X_{x_0})$, defined on $T_{x_0}(E)$ is positively homogeneous.
			\end{itemize}
		\end{itemize}
	\end{theorem}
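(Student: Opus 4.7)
The plan is to imitate the proof of Theorem~\ref{theorem5.1,p2} (Theorem~4.2 of Chapter~3 in \cite{udriste}) but check that only convexity of $f$ at the single point $x_0$ is actually needed for what remains, which is exactly why parts~(iii)-convexity and~(iv) of that earlier theorem are dropped here.

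For part~(i), I would fix $0<s_1<s_2$ in $I\cap(0,\infty)$, set $y=\gamma(s_2)\in E$, and consider the affine reparametrization $\tilde{\gamma}(t)=\gamma(ts_2)$ for $t\in[0,1]$. Because an affine change of parameter sends geodesics to geodesics and $\gamma([0,s_2])\subseteq \gamma(I)\subseteq E$, the curve $\tilde{\gamma}$ is a geodesic from $x_0$ to $y$ with image in $E$, so $\tilde{\gamma}\in\Gamma_0$. Applying the definition of convexity of $f$ at $x_0$ with $x=y$, geodesic $\tilde{\gamma}$ and parameter $s_1/s_2\in(0,1)$ gives
$$f(\gamma(s_1))=f(\tilde{\gamma}(s_1/s_2))\leq (1-s_1/s_2)f(x_0)+(s_1/s_2)f(\gamma(s_2)),$$
and rearranging (dividing by $s_1>0$) yields $Q(s_1)\leq Q(s_2)$, so $Q$ is non-decreasing.

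For part~(ii)(a), since $Q$ is non-decreasing on $I\cap(0,\infty)$ by~(i) and its right-limit at $0$ is assumed to exist and to equal $Df(x_0;X_{x_0})$, the standard real-analysis fact for monotone functions forces that limit to coincide with $\inf_s Q(s)$. For part~(ii)(b), I would take $\lambda\geq 0$: when $\lambda>0$ the curve $\sigma(s)=\gamma(\lambda s)$ is, by uniqueness of geodesics from their initial data, the geodesic with $\sigma(0)=x_0$ and $\dot{\sigma}(0)=\lambda X_{x_0}$, and the substitution $u=\lambda s$ in the defining limit produces $Df(x_0;\lambda X_{x_0})=\lambda\,Df(x_0;X_{x_0})$; when $\lambda=0$ the constant curve at $x_0$ is a geodesic, making the limit trivially $0$.

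The one delicate point is bookkeeping rather than analysis: verifying that $\tilde{\gamma}$ is admissible in the $\Gamma_0$-clause of the convexity definition at $x_0$, which is immediate from the star-shape assumption plus the fact that affine reparametrizations preserve geodesics. It is also worth flagging in the write-up \emph{why} the other conclusions (iii)-convexity-in-$X_x$ and~(iv) from Theorem~\ref{theorem5.1,p2} are missing here: their proofs require applying convexity of $f$ along geodesics based at points other than $x_0$, precisely the type of information that convexity only at $x_0$ does not supply.
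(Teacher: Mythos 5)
Your proposal is correct and follows essentially the same route as the paper: the paper disposes of parts (i) and (ii)(a) by pointing to the corresponding parts of Theorem~\ref{theorem5.1,p2} (whose underlying argument is exactly your slope inequality obtained from convexity at $x_0$ along the reparametrized geodesic, plus the monotone-limit-equals-infimum fact), and proves (ii)(b) by the same substitution $u=\lambda s$ in the defining limit. You merely write out explicitly the details the paper defers to \cite{udriste}.
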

	\begin{proof}
		Parts (i) and (ii)(a) are similar to the respective parts of Theorem \ref{theorem5.1,p2}.\\
		(ii)(b) For positive homogeneity of $g$, we let $s>0$ sufficiently small, such that for any $\lambda\geq0$, we have
		\begin{align*}
			g(\lambda X_{x_0}) &= Df(x_0;~\lambda X_{x_0})\\
			&= \lim\limits_{s\rightarrow0^+}\frac{\gamma(\lambda s)-f(x_0)}{s}\\
			&= \lim\limits_{s\rightarrow0^+}\frac{f(\gamma(u))-f(x_0)}{s},~~ \lambda s=u,\\
			&= \lambda Df(x_0;~X_{x_0}).
		\end{align*} 
	\end{proof}

\begin{remark} \rm 
	In Theorem \ref{theorem16*,p2}, if for any $X_{x_0} \in T_{x_0}(E)$, we have that $-X_{x_0} \in T_{x_0}(E)$, and also $Df(x_0;~ X_{x_0})$ and $Df(x_0; -X_{x_0})$ both exist, then
	$$-Df(x_0; -X_{x_0})\leq Df(x_0;~ X_{x_0})$$
	may not hold true in general. This is evident from the following example.
\end{remark}

\begin{example} \rm 
	Define $f: \mathbb{R} \rightarrow \mathbb{R}$ as
	$$f(x) = \begin{cases}
		0, &x\leq 0;\\
		-x, &x>0.
	\end{cases}$$
Here $f$ is convex at $x=0$. Let $x_0=1,$ then $Df(0;~1)=-1$ and $Df(0;-1)=0$. Clearly, $-Df(0;-1)\geq Df(0;~1).$
\end{example}

\begin{remark} \rm 
	In Theorem \ref{theorem16*,p2}, the function $g$ may fail to be convex on $T_{x_0}(E)$, even if we further restrict $E$ to be totally convex set. More precisely, Let $E$ be totally convex set and $f:E\rightarrow \mathbb{R}$ be directionally differentiable at $x_0$. If $f$ is convex at $x_0$, then $g(X_{x_0})=Df(x_0; X_{x_0})$ may fail to be convex on $T_{x_0}(E)$. The following example supports our argument.
\end{remark}
	
\begin{example}
	\rm Define $f: \mathbb{R} \rightarrow \mathbb{R}$ as
	$$f(x) = \begin{cases}
		1, &x\leq 0;\\
		-x+1, &x>0.
	\end{cases}$$
Here, both $E$ and $T_{x_0}(E)=\mathbb{R}$, $x_0=0$, are totally convex sets. $f$ is convex only at $x_0=0$. One can calculate that, for any $y\in T_{x_0}(E)=\mathbb{R}$,
$$g(y)= \begin{cases}
	0, &y\leq 0;\\
	-y, &y>0.
\end{cases}$$
Here, $g$ is convex at $y=0$ but fails to be convex on $T_{x_0}(E)=\mathbb{R}$.
\end{example}

Next, the following Theorem gives a necessary and sufficient condition for a real valued directionally differentiable function to be convex.

\begin{theorem}\cite{udriste} \label{theorem15,p2}
Suppose that the function $f: E \rightarrow \mathbb{R}$ is defined on a totally convex set $E\subseteq(M,g)$. Then
\begin{enumerate} 
	\item [(i)] $f$ is convex on $E$ if and only if 
\begin{equation} \label{eq9*,p2}
		f(y)-f(x)\geq Df(x;~X_x); ~~\forall~x,y \in E,~~ \forall~\gamma_{\1}\in \Gamma,
\end{equation} 
	where $\Gamma$ is the collection of geodesics joining $x$ and $y$ with $\gamma_{\1}(0)=x$ and $\dot{\gamma}_{\1}(0)=X_x$.
	\item [(ii)] $f$ is strictly convex on $E$ if and only if 
	$$f(y)-f(x)> Df(x;~X_x); ~~\forall~ x,y \in E,~x\neq y,~~ \forall~\gamma_{\1}\in \Gamma.$$
\end{enumerate}
\end{theorem}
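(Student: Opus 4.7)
The plan is to prove both directions of (i), after which (ii) follows by running the same argument with strict inequalities in place of weak ones.

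For the forward direction ($f$ convex implies the displayed inequality), I would fix $x, y \in E$ and a geodesic $\gamma_{xy} \in \Gamma$ with $\gamma_{xy}(0) = x$, $\gamma_{xy}(1) = y$, and $\dot{\gamma}_{xy}(0) = X_x$. The whole work is done by Theorem~\ref{theorem5.1,p2}(ii), which gives $Df(x; X_x) = \inf_{s} Q(s)$ for $Q(s) = (f(\gamma_{xy}(s)) - f(x))/s$. Evaluating $Q$ at $s = 1$ yields $f(y) - f(x) = Q(1) \geq Df(x; X_x)$, the required inequality, in essentially one line.

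For the converse, I would fix $x, y \in E$, a joining geodesic $\gamma : [0,1] \to E$ with $\gamma(0) = x$ and $\gamma(1) = y$, and study the real-valued function $h(t) := f(\gamma(t))$ on $[0,1]$; convexity of $f$ along $\gamma$ is equivalent to convexity of $h$. Applying the hypothesis at $\gamma(s_1)$ and $\gamma(s_2)$ (for $0 \leq s_1 < s_2 \leq 1$) along the two orientations of the restriction of $\gamma$ between them, and using positive homogeneity of $Df$ in its directional argument (a direct consequence of the definition via the reparameterization $s \mapsto \lambda s$ of the geodesic), I would obtain the chord--slope bounds
\[
h'_+(s_1) \;\leq\; \frac{h(s_2) - h(s_1)}{s_2 - s_1} \;\leq\; h'_-(s_2),
\]
where $h'_\pm$ denote the one-sided derivatives of $h$ (which exist since $Df(\gamma(s); \pm\dot{\gamma}(s))$ do).

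The hard part will be passing from these chord--slope bounds to the full convexity of $h$, since $h'_+(s)$ and $h'_-(s)$ need not coincide at every $s$. I plan to argue by contradiction: if $h$ fails to be convex, then there exist $a < c < b$ in $[0,1]$ with $\frac{h(c)-h(a)}{c-a} > \frac{h(b)-h(c)}{b-c}$, which combined with the chord--slope bounds forces $h'_+(c) < h'_-(c)$; a final application of the bounds with $s_1 = c - \delta$, $s_2 = c + \delta$, passed to the limit $\delta \to 0^+$, produces a contradictory sandwich involving the average $(h'_+(c) + h'_-(c))/2$. Part (ii) follows from the same chain of reasoning with every $\geq$ replaced by $>$ in each application of the hypothesis.
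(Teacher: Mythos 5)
The paper itself does not prove this theorem --- it is quoted from Udriste's book --- so I can only assess your argument on its own terms. Your forward direction is correct and is the standard one: with the geodesic segment parameterized so that $\gamma_{\1}(1)=y$, Theorem~\ref{theorem5.1,p2}(ii) gives $Df(x;X_x)=\inf_{s}Q(s)\leq Q(1)=f(y)-f(x)$; for part (ii) you should add explicitly that strict convexity makes $Q$ strictly increasing, so that $Df(x;X_x)=\lim_{s\to0^{+}}Q(s)<Q(1)$.

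The converse is where a genuine gap lies. Your chord--slope bounds $h'_+(s_1)\leq\frac{h(s_2)-h(s_1)}{s_2-s_1}\leq h'_-(s_2)$ are correctly derived (the reverse-orientation application of the hypothesis plus positive homogeneity is exactly right), and you correctly isolate the obstruction: nothing yet rules out $h'_+(c)<h'_-(c)$ at some $c$. But the proposed finishing move does not close this. In the sandwich $h'_+(c-\delta)\leq\frac{h(c+\delta)-h(c-\delta)}{2\delta}\leq h'_-(c+\delta)$ the middle term does converge to $\tfrac12\bigl(h'_+(c)+h'_-(c)\bigr)$, but the outer terms are evaluated at the moving points $c\pm\delta$, and the hypotheses give no control of $h'_+(c-\delta)$ or $h'_-(c+\delta)$ as $\delta\to0^{+}$ (only one-sided estimates such as $h'_+(c-\delta)\leq\frac{h(c)-h(c-\delta)}{\delta}$). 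Worse, even if one could replace them by $h'_+(c)$ and $h'_-(c)$, the resulting inequality $h'_+(c)\leq\tfrac12\bigl(h'_+(c)+h'_-(c)\bigr)\leq h'_-(c)$ is \emph{consistent} with $h'_+(c)<h'_-(c)$, so no contradiction appears. A working completion has to be global rather than local: subtract the chord, set $g=h-\ell$ with $g(0)=g(1)=0$, suppose $\max g=m>0$, and note that on the interval $(\alpha,\beta)$ between consecutive zeros of $g$ surrounding a maximizer one has $g'_+(t)\leq\frac{g(\beta)-g(t)}{\beta-t}<0$ wherever $g(t)>0$; a mean-value theorem for one-sided (Dini) derivatives of continuous functions then forces $g$ to be non-increasing on that interval, contradicting $g(\alpha)=0<m$. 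That Dini mean-value step is the missing ingredient in your outline.
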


The sufficient part of Theorem \ref{theorem15,p2} is not true in general when the convexity of the function is assumed at a single point. Hence,  we have the following Theorem

\begin{theorem}\cite{udriste} \label{theorem30,p2}
	Let $E \subseteq (M,g)$ be star-shaped at $x_0 \in E$ and the function $f:E\rightarrow \mathbb{R}$ be directionally differentiable at $x_0$, then,
	\begin{enumerate} 
		\item [(i)] if $f$ is convex at $x_0$, then 
		\begin{equation}
			f(x)-f(x_0)\geq Df(x_0;~X_{x_0}); ~~\forall~x \in E,~~ \forall~\gamma_{\2}\in \Gamma_0,
		\end{equation} \label{eq9,p2}
		where $\Gamma_0$ is the set of geodesics joining $x_0$ and $x$ such that $\gamma_{\2}(0)=x$ and $\dot{\gamma}_{\2}(0)=X_{x_0}$,
		\item [(ii)] if $f$ is strictly convex at $x_0$ then 
		$$f(x)-f(x_0)> Df(x_0;~X_{x_0}); ~~\forall~y \in E,~~ y\neq x_0~~ \forall~\gamma_{\2}\in \Gamma_0.$$
	\end{enumerate}
\end{theorem}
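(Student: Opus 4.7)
The plan is to leverage the convexity inequality at $x_0$ applied along the geodesic $\gamma_{\2}$ and then pass to the limit using the directional differentiability hypothesis. For part (i), I would start from the definition of convexity of $f$ at $x_0$, which gives
$$f(\gamma_{\2}(s)) \leq (1-s)f(x_0) + s f(x), \quad \forall\, s \in (0,1).$$
Subtracting $f(x_0)$ from both sides and dividing by $s > 0$ isolates the difference quotient on the left and the constant $f(x) - f(x_0)$ on the right. Taking $s \to 0^+$ and invoking the hypothesis that $f$ is directionally differentiable at $x_0$ in the direction $X_{x_0}$, the left-hand side converges to $Df(x_0; X_{x_0})$, producing the claimed inequality.

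For part (ii), a naive limit passage would collapse the strict inequality into a weak one, since strictness is generally not preserved by limits. To avoid this, I would invoke Theorem \ref{theorem16*,p2}(ii)(a), which under convexity of $f$ at $x_0$ and existence of $Df(x_0; X_{x_0})$ asserts $Df(x_0; X_{x_0}) = \inf_{s>0} Q(s)$, where $Q(s) = \frac{f(\gamma_{\2}(s)) - f(x_0)}{s}$. Strict convexity at $x_0$ entails convexity at $x_0$, so this characterization is available. Then for any fixed $s_0 \in (0,1)$, strict convexity directly gives $Q(s_0) < f(x) - f(x_0)$, and chaining with $Df(x_0; X_{x_0}) \leq Q(s_0)$ delivers the desired strict inequality.

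The only substantive obstacle is precisely this preservation of strictness in part (ii); the rest reduces to an algebraic rearrangement of the convexity hypothesis and use of the existence of $Df(x_0; X_{x_0})$. The key device is to replace the $s \to 0^+$ limit by the infimum characterization provided by Theorem \ref{theorem16*,p2}(ii)(a), which bypasses the limit-induced weakening. I would also remark that star-shapedness of $E$ at $x_0$ ensures the geodesic $\gamma_{\2}$ stays in $E$ for all relevant $s$, so that $f(\gamma_{\2}(s))$ is well defined along the segment.
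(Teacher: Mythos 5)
Your proof is correct; the paper states this theorem with a citation to \cite{udriste} and gives no proof of its own, and your argument is the standard one: rearrange the convexity inequality into the difference quotient and let $s \to 0^+$ for part (i), and for part (ii) replace the limit by the infimum/monotonicity characterization of $Q(s)$ from Theorem \ref{theorem16*,p2} so that the strict inequality $Q(s_0) < f(x)-f(x_0)$ at a fixed $s_0$ is not lost in the limit. That second step is exactly the point where a naive argument would fail, and you handle it correctly.
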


In general, the converse of Theorem \ref{theorem30,p2} cannot be assumed to hold true. This can be illustrated by the following example.

\begin{example}\label{exg3.4}
	\rm Let $\mathit{M}=\{e^{i\theta} : \theta \in [0,2\pi]\}$ be the unit circle in complex plane which is a compact $1-$dimensional Riemannian manifold$^*$.
	\let\thefootnote\relax\footnotetext{\footnotesize
		$^*$ In this case, we assume that the manifold \( \mathit{M} = \{e^{i\theta} : \theta \in [0,2\pi]\} \) is endowed with the identification \( 0 \sim 2\pi \), ensuring that the endpoints of the interval correspond to the same point in \( \mathit{M} \). As a result, \( \mathit{M} \) is diffeomorphic to the unit circle \( S^1 =\{(x,y)\in \mathbb{R}^2: x^2+y^2=1\} \), forming a compact 1-dimensional Riemannian manifold. The coordinate \(\theta\) provides a global parameterization within \( [0,2\pi] \), with the constraint that \(\theta = 0\) and \(\theta = 2\pi\) are identified to maintain the circular topology.
	}	 
	Fix $x_0 = e^{i\frac{\pi}{2}}=i \in M$. The geodesic segment joining $x_0=i$ with any $x=e^{i\theta}\in M$ is given by
$$\gamma(s) = e^{i(\frac{\pi}{2}+s(\theta -\frac{\pi}{2}))}, ~~~ s\in [0,1],$$
where $\gamma(0)=i$ and $\dot{\gamma}(0)=\frac{\pi}{2}-\theta=X_{x_0}\in \mathbb{R}$.

Define $f:M\rightarrow \mathbb{R}$, by
$$f(x)=f(e^{i\theta}) = \ln((\theta - \frac{\pi}{2})^2 +e).$$
The directional derivative of $f$ at $x_0=i$ in any direction $X_{x_0}=\frac{\pi}{2}-\theta$  is 
$$Df(x_0;~X_{x_0})=0.$$
For any $x=e^{i\theta}\in M$, we have 
$$f(x)-f(x_0)=\ln((\theta -\frac{\pi}{2})^2+e)-1$$
One can easily see that
$$f(x)-f(x_0)\geq Df(x_0;~X_{x_0}).$$
But $f$ fails to be convex at $x_0=i$, as one can check the following relation
$$f(\gamma(\frac{1}{2}))\nleq\frac{1}{2}f(i)+\frac{1}{2}f(-i)$$
Hence, in general, converse of Theorem \ref{theorem30,p2} is not true.
\end{example}

\section{gH-directional differentiability of an IVF}

In this section, we discuss the directional differentiability for an IVF. We show that the necessary part of the Lemma 3.2 in \cite{chen} is not true in general. 
\vspace{0.15cm}

The generalized Hukuhara difference (gH-difference) of two intervals $A$, $B$ $\in \mathbb{I}$, denoted by $A \ominus_{gH} B$, was introduced by Stefanini et. al. \cite{stefanini}. This can be expressed as:
$$A \ominus_{gH} B = [\text{min}\{a^l-b^l, a^u-b^u\}, \text{max}\{a^l-b^l, a^u-b^u\}].$$
 It's worth noting that for any two intervals $A, B\in \mathbb{I}$, $A \ominus_{gH} B$ always exists and is unique. Also, $A\ominus_{gH} A = [0,0].$

The following lemma expresses the gH-difference of two intervals in $\mathbb{I}$ in terms of their center and half-width.

\begin{lemma}
	\label{lemma13,p2}
	For any $A,B \in \mathbb{I}$ with $A=[a^l,a^u]=\langle a^c,a^w\rangle$ $\&$ $B=[b^l,b^u]=\langle b^c,b^w\rangle$, we have
	$$A\ominus_{gH} B~ = ~\langle a^c-b^c,~|a^w-b^w|\rangle$$
\end{lemma}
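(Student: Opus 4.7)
The plan is to unpack the definition of the gH-difference directly and compute the center and half-width of the resulting interval, using the elementary identities $\min\{p,q\}+\max\{p,q\}=p+q$ and $\max\{p,q\}-\min\{p,q\}=|p-q|$.

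First I would set $p = a^l-b^l$ and $q=a^u-b^u$, so that by definition
\[
A \ominus_{gH} B = [\min\{p,q\},\ \max\{p,q\}].
\]
Then the center of $A \ominus_{gH} B$ is
\[
\frac{\min\{p,q\}+\max\{p,q\}}{2}=\frac{p+q}{2}=\frac{(a^l+a^u)-(b^l+b^u)}{2}=a^c-b^c,
\]
which gives the first coordinate of the claimed representation.

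Next I would compute the half-width:
\[
\frac{\max\{p,q\}-\min\{p,q\}}{2}=\frac{|p-q|}{2}=\frac{|(a^u-a^l)-(b^u-b^l)|}{2}=|a^w-b^w|,
\]
using the definitions $a^w=(a^u-a^l)/2$ and $b^w=(b^u-b^l)/2$. Combining the two yields $A\ominus_{gH}B=\langle a^c-b^c,\ |a^w-b^w|\rangle$, as required.

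There is no real obstacle here; the only thing to be slightly careful about is that the identity $\max\{p,q\}-\min\{p,q\}=|p-q|$ handles both orderings $p\le q$ and $p>q$ at once, so one does not need a case split. Alternatively, one could present the argument as two cases (whether $a^w\ge b^w$ or $a^w<b^w$), but the symmetric formulation above is cleaner and produces the absolute value automatically.
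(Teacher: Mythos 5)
Your proof is correct and follows essentially the same route as the paper: both arguments reduce to the elementary identities relating $\min$, $\max$, the sum $p+q$, and the absolute difference $|p-q|$ of the endpoint differences, and then read off the center $a^c-b^c$ and half-width $|a^w-b^w|$. The paper phrases it by writing $\min\{p,q\}$ and $\max\{p,q\}$ as $\tfrac{p+q}{2}\mp\tfrac{|p-q|}{2}$, which is just an algebraic rearrangement of your computation.
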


\begin{proof} The gH-difference of $A$ and $B$ in terms of left and right bounds is given by
	\begin{align*}
		A \ominus_{gH} B &= [\text{min}\{a^l-b^l, a^u-b^u\}, \text{max}\{a^l-b^l, a^u-b^u\}]= [p,~q]\\ 
		\text{where}~~ p &=\frac{a^l-b^l+a^u-b^u}{2}-\frac{|a^u-b^u-a^l+b^l|}{2}\\
		\text{and}~~ q &= \frac{a^l-b^l+a^u-b^u}{2}+\frac{|a^u-b^u-a^l+b^l|}{2}\\
	\Rightarrow~~ 	A \ominus_{gH} B &= [a^c-b^c -|a^w-b^w|,~a^c-b^c +|a^w-b^w|].\\
	   &= \langle a^c-b^c,~|a^w-b^w|\rangle.
	\end{align*}
\end{proof}

\begin{definition}\label{Definition4.2,p2}\rm 
	Let $E$ be a subset of $(M,g)$ and $x\in E$. Let $X_x \in T_x(E)$ and $\gamma(s);~s\in I, ~0\in I~ \text{and}~ \gamma(I)\subseteq E$, be a geodesic with $\gamma(0)=x, ~ \text{and}~ \dot{\gamma}(0)=X_x$. We say an IVF $f:E \rightarrow \mathbb{I}$ with $f(x)= \langle f^c(x),~f^w(x)\rangle$ is gH-directionally differentiable at $x$ in the direction $X_x$, if the following limit
	$$Df(x;~X_x)=\lim\limits_{s\rightarrow0^+} \frac{f(\gamma(s))\ominus_{gH}f(x)}{s}$$
	exists, where $Df(x;X_x)$ is said to be gH-directional derivative of $f$ at $x$ in the direction $X_x$. Moreover, we say $f$ is gH-directionally differentiable at $x$, if $Df(x;X_x)$ exists at $x$ in every direction $X_x \in T_x(E)$. Furthermore, if $Df(x;X_x)$ exists at each $x\in E$ and in every direction $X_x \in T_x(E)$, we say $f$ is gH-directionally differentiable on $E$.
\end{definition}

The following lemma provides a condition that is enough to ensure that an IVF function can be gH-directionally differentiable.

\begin{lemma} \label{lemma18,p2}
	Let $f:E\rightarrow \mathbb{R}$ with $f(x)= \langle f^c(x),~f^w(x)\rangle$ be an IVF defined on $E\subseteq (M,g)$ and $\gamma(s);~s\in I, ~0\in I~ \text{and}~ \gamma(I)\subseteq E$, be a geodesic for which $\gamma(0)=x\in E, ~ \&~ \dot{\gamma}(0)=X_x\in T_{x}(E)$. Suppose that directional derivative of $f^c$ and $f^w$ exists at $x$ in the direction $X_x$, then gH-directional derivative of $f$ at $x$ in the direction $X_x$ also exists and hence
	\begin{equation} \label{eq**,p2}
		Df(x;X_x)~=~\langle Df^c(x;X_x),~|Df^w(x;X_x)|\rangle,
	\end{equation}
where $Df^c(x;X_x)$ and $Df^w(x;X_x)$ refer to the directional derivatives of $f^c$ and $f^w$, respectively, at $x$ in the direction $X_x$.
\end{lemma}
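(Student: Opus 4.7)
The plan is to reduce the gH-directional derivative to the already known directional derivatives of the real-valued component functions $f^c$ and $f^w$ by unfolding the center/half-width representation, then pass to the limit using Lemma \ref{lemmalimit,p2}.

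First I would apply Lemma \ref{lemma13,p2} to rewrite the gH-difference appearing in the definition of $Df(x;X_x)$ as
\[
f(\gamma(s))\ominus_{gH} f(x) \;=\; \bigl\langle f^c(\gamma(s))-f^c(x),\ \lvert f^w(\gamma(s))-f^w(x)\rvert\bigr\rangle,
\]
which is valid for each $s$ in a sufficiently small neighborhood of $0$ for which $\gamma(s)\in E$. Since we take $s\to 0^+$, the scalar $1/s$ is strictly positive, so Lemma \ref{lemma4,p2} (with $\alpha=1/s$, $\beta=0$) yields
\[
\frac{f(\gamma(s))\ominus_{gH} f(x)}{s} \;=\; \Bigl\langle \frac{f^c(\gamma(s))-f^c(x)}{s},\ \frac{\lvert f^w(\gamma(s))-f^w(x)\rvert}{s}\Bigr\rangle.
\]
Using $s>0$ once more, the half-width term equals $\bigl\lvert \tfrac{f^w(\gamma(s))-f^w(x)}{s}\bigr\rvert$.

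Next I would invoke the hypothesis that $Df^c(x;X_x)$ and $Df^w(x;X_x)$ exist. By Definition \ref{definition5.1,p2} the center component tends to $Df^c(x;X_x)$ as $s\to 0^+$, and by continuity of the absolute value the half-width component tends to $\lvert Df^w(x;X_x)\rvert$. Applying Lemma \ref{lemmalimit,p2} to the center/half-width representation produced above, the IVF-valued limit exists and equals
\[
\bigl\langle Df^c(x;X_x),\ \lvert Df^w(x;X_x)\rvert\bigr\rangle,
\]
which is exactly the claimed formula \eqref{eq**,p2}. This also shows, by Definition \ref{Definition4.2,p2}, that $f$ is gH-directionally differentiable at $x$ in the direction $X_x$.

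There is no substantial obstacle: the proof is essentially a calculation, and the only subtlety is remembering that $s>0$ both (i) lets us pull the positive scalar $1/s$ through the half-width of the interval without the sign reversal described in the scalar-multiplication rule for intervals, and (ii) lets us move it inside the absolute value. Verifying the non-negativity of the resulting half-width $\lvert Df^w(x;X_x)\rvert$ (needed to ensure the limit is a bona fide interval) is automatic.
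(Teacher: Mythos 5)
Your proposal is correct and follows essentially the same route as the paper's proof: rewrite the gH-difference via Lemma \ref{lemma13,p2}, pull the positive scalar $1/s$ inside via Lemma \ref{lemma4,p2}, pass to the limit componentwise with Lemma \ref{lemmalimit,p2}, and use continuity of the absolute value to identify the half-width limit as $\lvert Df^w(x;X_x)\rvert$. No gaps.
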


\begin{proof}
      \begin{align*}
      	Df(x;X_x) &= \lim\limits_{s\rightarrow0^+} \frac{f(\gamma(s))\ominus_{gH} f(x)}{s},\\
      	&= \lim\limits_{s\rightarrow0^+}\frac{\left \langle f^c(\gamma(s)),~f^w(\gamma(s))\right \rangle \ominus_{gH} \left \langle f^c(x),~f^w(x)\right \rangle}{s}\\
      	\intertext{which by Lemma \ref{lemma13,p2} and Lemma \ref{lemma4,p2}, yields}
     Df(x;X_x) &= \lim\limits_{s\rightarrow0^+} \left\langle \frac{f^c(\gamma(s))- f^c(x)}{s},~\frac{|f^w(\gamma(s))- f^w(x)|}{s}\right\rangle.\\
     \intertext{Using Lemma \ref{lemmalimit,p2}, we get}
     Df(x;X_x) &=  \left\langle \lim\limits_{s\rightarrow0^+}\frac{f^c(\gamma(s))- f^c(x)}{s},~\lim\limits_{s\rightarrow0^+}\frac{|f^w(\gamma(s))- f^w(x)|}{s}\right\rangle,\\
     \intertext{which by continuity of modulus function and non-negativity of s, yields}
     Df(x;X_x) &=  \left\langle \lim\limits_{s\rightarrow0^+}\frac{f^c(\gamma(s))- f^c(x)}{s},~\left|\lim\limits_{s\rightarrow0^+}\frac{f^w(\gamma(s))- f^w(x)}{s}\right|\right\rangle,\\
     \text{i.e., ~~~~~} Df(x;X_x) &= \langle Df^c(x;X_x),~|Df^w(x;X_x)|\rangle. 
      \end{align*}
\end{proof}

\begin{remark} \label{remark4.1,p2}\rm 
	Suppose that an IVF $f:E\rightarrow \mathbb{I}$ with $f(x)= [f^l(x), f^u(x)] = \langle f^c(x), f^w(x) \rangle $, defined on subset $E\subseteq (M,g)$, is gH-directionally differentiable at $x\in E$ in the direction $v \in T_x(E)$. Then, from Lemma 3.2 in \cite{chen}, both $f^l$ and $f^u$ are directionally differentiable at $x\in E$ in the direction $v \in T_x(E)$ and hence, using Lemma \ref{lemmaf+g,p2}, $f^w = \frac{f^u-f^l}{2}$ is directionally differentiable at $x\in E$ in the direction $v \in T_x(E)$. But the Example \ref{example20,p2}, which follows next, shows otherwise, i.e., the necessary part of Lemma 3.2 in \cite{chen}  and the converse of Lemma \ref{lemma18,p2} remain invalid in general.
\end{remark}

\begin{example} \label{example20,p2}
	\rm Consider the Riemannian manifold \( \mathit{M} \) as defined in Example \ref{exg3.4} and choose $x = e^{i\frac{\pi}{2}}=i$ and $y=e^{i\pi}=-1$. The geodesic segment joining $x=i$ and $y=-1$ is given by
	$$\gamma_{\1}(s) = e^{i\frac{\pi}{2}(1+s)}, ~~~ s\in [0,1],$$	
	and the  tangent vector to the geodesic is  $X_x=\dot{\gamma}_{\1}(0)=-\frac{\pi}{2}\in \mathbb{R}.$
	
	Define an IVF $f:M \rightarrow \mathbb{I}$ as following
	\begin{align*}
		f(x) &= ~\langle 0,~f^w(x)\rangle,\\
		\text{where}~~~ f^w(x)=f^w(e^{i\theta})&= \begin{cases}
			1, & x\in M\setminus \gamma_{\1}(s), ~~ s\in [0,1];\\
			\theta, & x\in \gamma_{\1}(s), ~~~~~~~~ s \in [0,1]\cap \mathbb{Q};\\
			\pi - \theta, & x \in \gamma_{\1}(s),  ~~~~~~~~ s \in [0,1]\cap \mathbb{R}\setminus\mathbb{Q}.
		\end{cases}
	\end{align*}
The limit of slope function of secants to $f^w$ at $x=i$ in the direction $X_x=-\frac{\pi}{2}$ is
$$\lim\limits_{s\rightarrow0^+}\frac{f^w(\gamma_{\1}(s)-f^w(x))}{s}= \begin{cases}
	~~\frac{\pi}{2}, & s \in \mathbb{Q};\\
	-\frac{\pi}{2}, & s \in [0,1]\cap \mathbb{R}\setminus\mathbb{Q},
\end{cases}$$
which is not unique and hence $f^w$ is not directionally differentiable at $x=i$ in the direction $X_x=-\frac{\pi}{2}.$

However, $f$ is gH-directionally differentiable at $x=i$ in the direction $X_x=-\frac{\pi}{2}$, which is evident from the following:
	$$\frac{f(\gamma_{\1}(s))\ominus_{gH}f(x)}{s} = 	\frac{f(e^{i\frac{\pi}{2}(1+s)})\ominus_{gH}f(e^{i\frac{\pi}{2}})}{s},$$
	which by Lemma \ref{lemma13,p2}, yields
	\begin{align*}
		\frac{f(\gamma_{\1}(s))\ominus_{gH}f(x)}{s} &= \frac{1}{s}\left\langle f^c(e^{i\frac{\pi}{2}(1+s)})-f^c(e^{i\frac{\pi}{2}}),~ f^w(e^{i\frac{\pi}{2}(1+s)})-f^w(e^{i\frac{\pi}{2}})\right\rangle,\\
		&= \begin{cases}
			\frac{1}{s} \left\langle 0,~|\frac{\pi}{2}(1+s)-\frac{\pi}{2}\right\rangle, & s \in \mathbb{Q};\\
			\frac{1}{s} \left\langle 0,~|\pi-\frac{\pi}{2}(1+s)-\frac{\pi}{2}\right\rangle, & s \in [0,1]\cap \mathbb{R}\setminus\mathbb{Q},
		\end{cases}\\
	&= \begin{cases}
		\left \langle 0,~\frac{\pi}{2}\right \rangle, & s \in \mathbb{Q};\\
		\left \langle 0,~\frac{\pi}{2}\right \rangle, & s \in \mathbb{Q},
	\end{cases}\\
&= \left\langle 0,\frac{\pi}{2}\right\rangle.
	\end{align*}
Hence, the gH-directional derivative of $f$ at $x=i$ in the direction $X_x=-\frac{\pi}{2}$ is 
$$Df(x; X_x) = \lim\limits_{s\rightarrow0^+} \frac{f(\gamma_{\1}(s))\ominus_{gH}f(x)}{s} = \left\langle 0, \frac{\pi}{2}\right\rangle,$$
This shows that the necessary part of Lemma 3.2 in \cite{chen} and the converse of Lemma \ref{lemma18,p2} do not hold true in general.
\end{example}

The following lemma guarantees the existence of directional derivative of center function $f^c$ of a gH-directionally differentiable IVF $f(x)=\langle f^c(x),f^w(x)\rangle$.

\begin{lemma} \label{lemma21,p2}
	Suppose that $f:E\rightarrow \mathbb{R}$ with $f(x)= \langle f^c(x),~f^w(x)\rangle$ is an IVF defined on $E\subseteq (M,g)$ and $\gamma(s);~s\in I, ~0\in I~ \text{and}~ \gamma(I)\subseteq E$, is a geodesic for which $\gamma(0)=x\in E, ~ \&~ \dot{\gamma}(0)=X_x\in T_{x}(E)$. Suppose that gH-directional derivative of $f$ exists at $x$ in the direction $X_x$, then the directional derivative of $f^c$ also exists at $x$ in the direction $X_x$.
\end{lemma}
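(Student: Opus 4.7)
The proof should be essentially a direct unpacking of definitions, relying on two earlier results: the center/half-width expression for the gH-difference (Lemma \ref{lemma13,p2}) and the component-wise characterisation of limits for IVFs (Lemma \ref{lemmalimit,p2}).

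First, I would rewrite the difference quotient in center/half-width form. For $s > 0$ in $I \cap (0,\infty)$, Lemma \ref{lemma13,p2} gives
\[
f(\gamma(s)) \ominus_{gH} f(x) = \bigl\langle f^c(\gamma(s)) - f^c(x),\; |f^w(\gamma(s)) - f^w(x)| \bigr\rangle,
\]
and multiplying by the positive scalar $1/s$ via Lemma \ref{lemma4,p2} yields
\[
\frac{f(\gamma(s)) \ominus_{gH} f(x)}{s} = \left\langle \frac{f^c(\gamma(s)) - f^c(x)}{s},\; \frac{|f^w(\gamma(s)) - f^w(x)|}{s} \right\rangle.
\]

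Next, the hypothesis that $Df(x;X_x)$ exists means precisely that the interval-valued expression on the left has a limit in $(\mathbb{I}, d_H)$ as $s \to 0^+$; write this limit as $\langle c, w \rangle$. Applying the one-sided analogue of Lemma \ref{lemmalimit,p2} (which follows from the same $\varepsilon$-$\delta$ argument since $d_H(A,B) = \max\{|a^c-b^c|+|a^w-b^w|, \ldots\}$ dominates both the center and half-width discrepancies), the existence of this limit forces the limits of its center and half-width components to exist separately. In particular,
\[
\lim_{s\to 0^+} \frac{f^c(\gamma(s)) - f^c(x)}{s} = c,
\]
which is by definition $Df^c(x; X_x)$.

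There is no substantial obstacle here; the only point worth a brief remark is justifying the one-sided version of Lemma \ref{lemmalimit,p2}, which I would either cite as an immediate adaptation of its proof or reprove in a single line from the Hausdorff-distance formula \eqref{metric,p2}, noting that $\max\{|a^c-b^c|, |a^w-b^w|\}$ bounds both $|a^c-b^c|$ and $|a^w-b^w|$ and is bounded by their sum. The proof of the lemma then amounts to these two short steps; no further machinery is required.
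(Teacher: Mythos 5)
Your proposal is correct and follows essentially the same route as the paper's proof: rewrite the difference quotient via Lemma \ref{lemma13,p2} and Lemma \ref{lemma4,p2} as $\left\langle \frac{f^c(\gamma(s))-f^c(x)}{s}, \frac{|f^w(\gamma(s))-f^w(x)|}{s}\right\rangle$, then invoke Lemma \ref{lemmalimit,p2} to conclude that existence of the interval limit forces existence of the limit of the center component. Your explicit remark on justifying the one-sided variant of Lemma \ref{lemmalimit,p2} from the Hausdorff-distance formula is a small point of care the paper leaves implicit, but it does not change the argument.
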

\begin{proof}
	\begin{align*}
		Df(x;X_x) &= \lim\limits_{s\rightarrow0^+} \frac{f(\gamma(s))\ominus_{gH} f(x)}{s},\\
		&= \lim\limits_{s\rightarrow0^+}\frac{\left \langle f^c(\gamma(s)),~f^w(\gamma(s))\right \rangle \ominus_{gH} \left \langle f^c(x),~f^w(x)\right \rangle}{s}\\
		\intertext{which by Lemma \ref{lemma13,p2} and Lemma \ref{lemma4,p2}, yields}
		Df(x;X_x) &= \lim\limits_{s\rightarrow0^+} \left\langle \frac{f^c(\gamma(s))- f^c(x)}{s},~\frac{|f^w(\gamma(s))- f^w(x)|}{s}\right\rangle.\\
		\intertext{Using Lemma \ref{lemmalimit,p2}, we get}
		Df(x;X_x) &=  \left\langle \lim\limits_{s\rightarrow0^+}\frac{f^c(\gamma(s))- f^c(x)}{s},~\lim\limits_{s\rightarrow0^+}\frac{|f^w(\gamma(s))- f^w(x)|}{s}\right\rangle.
	\end{align*}
Since as $Df(x;X_x)$ exists, the last equation guarantees that 
$$Df^c(x,X_x)=\lim\limits_{s\rightarrow0^+}\frac{f^c(\gamma(s))- f^c(x)}{s},$$
exists. This completes the proof.
\end{proof}

So far now, the inversion of Lemma \ref{lemma18,p2} is not possible in general. The problem arises with the half-width (radius) function (see Example \ref{example20,p2} \& Lemma \ref{lemma21,p2}). However, the inversion is possible for the class of IVF in which the composition of radius function and the geodesic, which emanates from a point in a direction at which the function is gH-directionally differentiable in the same direction, is non-decreasing in [0, $\infty$). The same is addressed in the following lemma.

\begin{lemma} \label{lemma22,p2}
		Let $f:E\rightarrow \mathbb{R}$ with $f(x)= \langle f^c(x),~f^w(x)\rangle$ be an IVF defined on $E\subseteq (M,g)$. Let $x\in E$ and $\gamma(s);~s\in I, ~0\in I~ \text{and}~ \gamma(I)\subseteq E$, be any geodesic for which $\gamma(0)=x, ~ \&~ \dot{\gamma}(0)=X_x\in T_{x}(E)$ such that $(f^w \circ \gamma)(s)$ is non-decreasing for $s \in I\cap[0,\infty)$. Then, gH-directional derivative of $f$ exists at $x$ in the direction $X_x$ if and only if the directional derivative of $f^c$ and $f^w$ exists at $x$ in the direction $X_x$. Hence,
		$$Df(x;X_x)~ = ~\langle Df^c(x;X_x),~Df^w(x;X_x)\rangle.$$
		where $Df^w(x;X_x)\geq 0$.
\end{lemma}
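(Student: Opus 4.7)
The plan is to prove the equivalence by handling the two directions separately, with the non-decreasing hypothesis on $f^w\circ\gamma$ serving only to identify $|Df^w(x;X_x)|$ with $Df^w(x;X_x)$ and to upgrade existence of the absolute-value limit to existence of the one-sided limit without modulus.

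For the sufficient direction, assume $Df^c(x;X_x)$ and $Df^w(x;X_x)$ exist. Lemma \ref{lemma18,p2} immediately yields the existence of $Df(x;X_x)$ together with the formula $Df(x;X_x)=\langle Df^c(x;X_x),\,|Df^w(x;X_x)|\rangle$. For $s>0$ small enough that $s\in I$, the monotonicity hypothesis gives $f^w(\gamma(s))\ge f^w(\gamma(0))=f^w(x)$, so the difference quotient $\frac{f^w(\gamma(s))-f^w(x)}{s}$ is non-negative; passing to the limit, $Df^w(x;X_x)\ge 0$, and the absolute value may be dropped to recover the stated formula.

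For the necessary direction, assume $Df(x;X_x)$ exists. Lemma \ref{lemma21,p2} already supplies the existence of $Df^c(x;X_x)$. To handle $f^w$, I would start from the identity derived in the proof of Lemma \ref{lemma18,p2},
\[
\frac{f(\gamma(s))\ominus_{gH}f(x)}{s}=\left\langle\frac{f^c(\gamma(s))-f^c(x)}{s},\,\frac{|f^w(\gamma(s))-f^w(x)|}{s}\right\rangle,
\]
which combines Lemma \ref{lemma13,p2} and Lemma \ref{lemma4,p2}. Since $Df(x;X_x)$ exists, Lemma \ref{lemmalimit,p2} guarantees that the half-width limit $\lim_{s\to 0^+}\frac{|f^w(\gamma(s))-f^w(x)|}{s}$ exists. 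The monotonicity of $f^w\circ\gamma$ on $I\cap[0,\infty)$ now removes the absolute value: for every $s>0$ in $I$ one has $f^w(\gamma(s))-f^w(x)\ge 0$, so the two difference quotients coincide. Therefore $Df^w(x;X_x)$ exists, equals this common limit, and is automatically non-negative, which together with the formula for the center part yields $Df(x;X_x)=\langle Df^c(x;X_x),\,Df^w(x;X_x)\rangle$.

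The main obstacle, and the reason the hypothesis is indispensable, is the absolute value appearing in the half-width part of the gH-difference: in general $\lim|g(s)|$ can exist while $\lim g(s)$ does not, and this is exactly the mechanism exploited in Example \ref{example20,p2} to break the converse of Lemma \ref{lemma18,p2}. The monotonicity assumption is precisely what is required to fix the sign of $f^w(\gamma(s))-f^w(x)$ near $s=0^+$ and thereby transfer existence of the modulus limit back to $f^w$ itself.
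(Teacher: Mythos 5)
Your proposal is correct and follows essentially the same route as the paper: the sufficient direction and the existence of $Df^c$ are delegated to Lemma \ref{lemma18,p2} and Lemma \ref{lemma21,p2}, and the only genuinely new step --- using the monotonicity of $f^w\circ\gamma$ on $I\cap[0,\infty)$ to fix the sign of $f^w(\gamma(s))-f^w(x)$ and thereby drop the absolute value in the half-width difference quotient --- is exactly the argument the paper gives. Your closing remark correctly identifies why the hypothesis is indispensable, matching the role of Example \ref{example20,p2}.
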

\begin{proof}
	Here we only need to show that directional derivative of $f^w$ exists at $x$ in the direction $X_x$ when gH-directional derivative of $f$ exists at $x$ in the direction $X_x$. The rest of the proof follows from Lemma \ref{lemma18,p2} and Lemma \ref{lemma21,p2}.
	Continuing on the same lines of the proof of Lemma \ref{lemma21,p2}, we have
	$$Df(x,X_x)=\left\langle \lim\limits_{s\rightarrow0^+}\frac{f^c(\gamma(s))- f^c(x)}{s},~\lim\limits_{s\rightarrow0^+}\frac{|f^w(\gamma(s))- f^w(x)|}{s}\right \rangle,$$
	which from the existence of $Df(x,X_x)$ yields the existence of following limit
	$$\lim\limits_{s\rightarrow0^+}\frac{|f^w(\gamma(s))- f^w(x)|}{s}.$$
	Since as $(f^w\circ \gamma)(s)$ is non-decreasing in $I \cap [0, \infty)$, we have
	$$(f^w\circ \gamma)(s) \geq (f^w\circ \gamma)(0),~~~ s>0,$$
	$$\Rightarrow ~~ f^w(\gamma(s)) - f^w(x) \geq 0.$$
	Hence, we have the existence of following
	$$Df^w(x,X_x)=\lim\limits_{s\rightarrow0^+}\frac{f^w(\gamma(s))- f^w(x)}{s} = \lim\limits_{s\rightarrow0^+}\frac{|f^w(\gamma(s))- f^w(x)|}{s}.$$
	This completes the proof.
\end{proof}

The following example is in support of Lemma \ref{lemma22,p2}.

\begin{example}\label{exg4.2} \rm 
	Let $\mathit{M}=\{e^{i\theta} : \theta \in \mathbb{R}\}$ be a non-compact $1-$dimensional Riemannian manifold$^*$
	\let\thefootnote\relax\footnotetext{\footnotesize
		$^*$ In this case, we assume that the manifold \( \mathit{M} = \{e^{i\theta} : \theta \in \mathbb{R}\} \) is not periodic, meaning that distinct values of \(\theta\) correspond to distinct points in \( \mathit{M} \). As a result, \( \mathit{M} \) is diffeomorphic to \(\mathbb{R}\) rather than the unit circle \( S^1 =\{(x,y)\in \mathbb{R}^2: x^2+y^2=1\} \). Unlike the standard compact circle where \(\theta\) is identified modulo \(2\pi\), our construction treats \(\theta\) as a global coordinate extending infinitely in both directions.
	}
	and let $x_0=i$. The tangent space at $x_0=i$ is $T_i(M)=\mathbb{R}$. The geodesic emanating from $x_0=i$ in any direction $v\in \mathbb{R}$ is given by
	$$\gamma(s) = ie^{-isv},~~ s \in I,$$
	where $I$ is an interval in $\mathbb{R}$ containing $0$ such that $\gamma(I)\subseteq M$.
	
	We define $f: M \rightarrow \mathbb{I}$, as 
	$$f(x)=f(e^{i\theta})= \langle 1, ~(\theta - \frac{\pi}{2})^2\rangle, ~~\theta\in [0, 2\pi].$$
	One can calculate that $(f^w \circ \gamma)(s)=s^2v^2$ which is clearly non-decreasing in $I\cap [0,\infty)$. Also, gH-directional derivative of $f$ at $i$ in any direction $v\in \mathbb{R}$ is 
	$$Df(i;v) = \langle 0,0 \rangle=\langle Df^c(i;v), |Df^w(i;v)|\rangle,$$
	where $Df^c(i;v)$ and $Df^w(i;v)$ are respectively the directional derivatives of $f^c$ and $f^w$ at $i$ in any direction $v\in \mathbb{R}$.
\end{example}
\begin{remark}\rm
	On Riemannian manifolds such as circles and spheres, where all geodesics are periodic, there does not exist a function \( f \) such that its composition with any geodesic is strictly non-decreasing. The periodicity of geodesics implies that any such function must be constant, as a strictly non-decreasing function would contradict the periodic nature of the geodesics.
\end{remark}

The following lemma gives an equivalence of gH-directional differentiability of an IVF in terms of its end point functions.
\begin{lemma}
	Under the same assumptions of Lemma \ref{lemma22,p2}, for an IVF with $f(x)=[ f^l(x), f^u(x)]=\langle f^c(x), f^w(x) \rangle$, we have
	$$Df(x;X_x)=[ Df^l(x;X_x), Df^u(x;X_x)],$$
	where $Df^l(x;X_x)=Df^c(x)-Df^w(x)$ and $Df^u(x;X_x)=Df^c(x)+Df^w(x)$.
\end{lemma}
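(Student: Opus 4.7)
The plan is to deduce the endpoint formula by stringing together two facts: the directional derivatives of $f^l$ and $f^u$ exist and split additively (because they are $\mathbb{R}$-linear combinations of $f^c$ and $f^w$), and the center/half-width form of $Df(x;X_x)$ supplied by Lemma \ref{lemma22,p2} can be rewritten in endpoint form because $Df^w(x;X_x)$ is guaranteed to be non-negative. No genuinely new analytic input is required; the argument is essentially a bookkeeping translation between the two encodings of an interval, valid precisely because the monotonicity hypothesis $(f^w\circ\gamma)$ non-decreasing forces $Df^w\geq 0$.

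First I would apply Lemma \ref{lemma22,p2} to conclude both that $Df^c(x;X_x)$ and $Df^w(x;X_x)$ exist and that
$$Df(x;X_x)=\langle Df^c(x;X_x),\,Df^w(x;X_x)\rangle\quad\text{with}\quad Df^w(x;X_x)\geq 0.$$
Next, since $f^l(x)=f^c(x)-f^w(x)$ and $f^u(x)=f^c(x)+f^w(x)$, Lemma \ref{lemmaf+g,p2} applied with $\alpha_1=\alpha_2=1$ and appropriate signs gives that $f^l$ and $f^u$ are directionally differentiable at $x$ in the direction $X_x$, with
$$Df^l(x;X_x)=Df^c(x;X_x)-Df^w(x;X_x),\qquad Df^u(x;X_x)=Df^c(x;X_x)+Df^w(x;X_x).$$

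Finally I would convert between the two representations of an interval: by definition of the $\langle\cdot,\cdot\rangle$ notation together with $Df^w(x;X_x)\geq 0$, we have
$$\langle Df^c(x;X_x),\,Df^w(x;X_x)\rangle=[\,Df^c(x;X_x)-Df^w(x;X_x),\;Df^c(x;X_x)+Df^w(x;X_x)\,],$$
which, by the displayed identities for $Df^l$ and $Df^u$, equals $[Df^l(x;X_x),Df^u(x;X_x)]$, completing the argument. The only point demanding any care is verifying that the half-width portion of $Df(x;X_x)$ is non-negative so that the bracket on the right is a legitimate interval (i.e.\ $Df^l\leq Df^u$); this is exactly the content of the non-decreasing hypothesis on $f^w\circ\gamma$ carried over from Lemma \ref{lemma22,p2}, so there is no real obstacle beyond citing the preceding results correctly.
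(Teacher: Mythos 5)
Your proposal is correct and follows exactly the route the paper intends: the paper's proof is a one-line appeal to Lemma \ref{lemma22,p2} and Lemma \ref{lemmaf+g,p2}, and you have simply written out the details of that same argument (existence and the center/half-width formula from Lemma \ref{lemma22,p2}, additivity of directional derivatives for $f^l=f^c-f^w$ and $f^u=f^c+f^w$ from Lemma \ref{lemmaf+g,p2}, and the conversion $\langle c,w\rangle=[c-w,c+w]$ justified by $Df^w(x;X_x)\geq 0$).
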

\begin{proof}
The proof can be directly inferred from Lemma \ref{lemma22,p2} and Lemma \ref{lemmaf+g,p2}.
\end{proof}

\begin{lemma} \label{lemma4**,p2}
	Let $E\subseteq (M,g)$ be star-shaped at $x_0 \in E$. Then $f:E \rightarrow \mathbb{R}$ is convex at $x_0$ if and only if for any $x \in E$ the function $f \circ \gamma_{\2}:[0,1]\rightarrow\mathbb{R}$ is convex at 0, where $\gamma_{\2}$ is the geodesic segment joining $x_0$ and $x$.
\end{lemma}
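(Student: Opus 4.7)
The plan is to unpack both directions directly from the definition of convexity at a point, using the elementary fact that a linear reparametrization of a geodesic is again a geodesic. Concretely, if $\gamma_{x_0x}:[0,1]\to E$ is a geodesic with $\gamma_{x_0x}(0)=x_0$ and $\gamma_{x_0x}(1)=x$, then for every $t\in[0,1]$ the map $u\mapsto \gamma_{x_0x}(ut)$, $u\in[0,1]$, is a geodesic joining $x_0$ to the point $y:=\gamma_{x_0x}(t)\in E$ (the latter belongs to $E$ by star-shapedness at $x_0$). Also, the interval $[0,1]\subset \mathbb{R}$ is star-shaped at $0$, and the geodesic segment there from $0$ to any $t\in[0,1]$ is simply $s\mapsto st$.

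For the forward implication, I assume $f$ is convex at $x_0$. Fix any $x\in E$, the associated geodesic $\gamma_{x_0x}$, any $t\in[0,1]$ and any $s\in(0,1)$. Put $y=\gamma_{x_0x}(t)\in E$ and let $\tilde{\gamma}(u)=\gamma_{x_0x}(ut)$, which is a geodesic in $\Gamma_0$ from $x_0$ to $y$. Applying the definition of convexity of $f$ at $x_0$ along $\tilde{\gamma}$ at parameter $s$ gives
\[
f(\gamma_{x_0x}(st)) = f(\tilde{\gamma}(s)) \leq (1-s)f(x_0)+sf(y) = (1-s)(f\circ\gamma_{x_0x})(0) + s(f\circ\gamma_{x_0x})(t),
\]
which is exactly the definition of $f\circ\gamma_{x_0x}$ being convex at $0\in[0,1]$.

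For the reverse implication, I assume $f\circ\gamma_{x_0x}$ is convex at $0$ for every $x\in E$ and every geodesic $\gamma_{x_0x}\in\Gamma_0$. Given $x\in E$, $\gamma_{x_0x}\in\Gamma_0$ and $s\in(0,1)$, I simply apply the convexity of $f\circ\gamma_{x_0x}$ at $0$ with $t=1$, which yields
\[
f(\gamma_{x_0x}(s)) = (f\circ\gamma_{x_0x})(s\cdot 1) \leq (1-s)(f\circ\gamma_{x_0x})(0)+s(f\circ\gamma_{x_0x})(1) = (1-s)f(x_0)+sf(x),
\]
establishing convexity of $f$ at $x_0$.

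The argument is essentially bookkeeping once one observes the reparametrization identity, so no real obstacle is expected; the only point that must be stated carefully is that reparametrizing $\gamma_{x_0x}$ by a positive linear factor produces another admissible geodesic from $\Gamma_0$, so the hypothesis of convexity at $x_0$ can legitimately be applied along it, and symmetrically that every geodesic from $0$ to $t$ inside $[0,1]$ is of the form $s\mapsto st$. If one wishes a strict analogue, replacing $\leq$ by $<$ throughout and restricting to $x\neq x_0$, $t\in(0,1]$ (respectively $t=1$) gives the strict version of the equivalence with no further change.
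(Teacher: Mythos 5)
Your argument is correct and is essentially the proof the paper has in mind: the paper merely cites the analogous Theorem 2.2 of Chapter 3 in Udriste, whose proof is exactly this affine-reparametrization bookkeeping ($u\mapsto\gamma_{x_0x}(ut)$ is again an admissible geodesic from $x_0$ to $\gamma_{x_0x}(t)\in E$, and conversely $t=1$ recovers the defining inequality). No gaps; the one point worth stating, which you do state, is that the reparametrized curve remains a geodesic lying in $E$ so that convexity at $x_0$ may be applied along it.
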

\begin{proof}
	The proof is analogous to Theorem 2.2 of Chapter 3 in \cite{udriste}.
\end{proof}

\begin{lemma} \label{lemmaslope,p2}
	Let $f:[0,1] \rightarrow \mathbb{R}$ be convex at $0$, then, for any $0<s_1<s_2$, we have
	$$\frac{f(s_1)-f(0)}{s_1}\leq \frac{f(s_2)-f(0)}{s_2}$$.
\end{lemma}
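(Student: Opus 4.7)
The plan is to reduce the claimed inequality to a single direct application of the definition of convexity at $0$ by writing the smaller point $s_1$ as a convex combination of the endpoints $0$ and $s_2$.

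First, I set $t = s_1/s_2$ and observe that $t \in (0,1)$ since $0 < s_1 < s_2 \leq 1$. Then $s_1 = (1-t)\cdot 0 + t\cdot s_2$, so $s_1$ lies on the geodesic segment joining $0$ and $s_2$ in $[0,1]$ (the geodesic here is just the linear interpolation, since the ambient space is $\mathbb{R}$). Applying the definition of convexity of $f$ at $0$, with $x = s_2$ and parameter $t$, yields
\begin{equation*}
f(s_1) = f\bigl((1-t)\cdot 0 + t \cdot s_2\bigr) \leq (1-t) f(0) + t f(s_2).
\end{equation*}

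Next, I rearrange: subtract $f(0)$ from both sides to obtain $f(s_1) - f(0) \leq t\bigl(f(s_2) - f(0)\bigr) = (s_1/s_2)\bigl(f(s_2) - f(0)\bigr)$. Dividing by the positive quantity $s_1$ gives the desired slope inequality
\begin{equation*}
\frac{f(s_1) - f(0)}{s_1} \leq \frac{f(s_2) - f(0)}{s_2}.
\end{equation*}

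There is no real obstacle here: the only subtlety is to appreciate that convexity at a single point $0$ is exactly the hypothesis needed, because we are always comparing the value of $f$ at an intermediate point $s_1$ with the line through $\bigl(0, f(0)\bigr)$ and $\bigl(s_2, f(s_2)\bigr)$, and the endpoint that is held fixed is always $0$. One might also briefly note that this lemma is essentially the one-dimensional analogue of Theorem~\ref{theorem16*,p2}(i) restricted to the model geodesic $\gamma(s)=s$ on $[0,1]$, so it can alternatively be derived as a corollary of that theorem (via Lemma~\ref{lemma4**,p2} when one wants to lift it back to a manifold setting), but the direct argument above is cleaner.
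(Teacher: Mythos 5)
Your proof is correct and is exactly the standard argument the paper leaves implicit (its proof reads only ``The proof is obvious''): writing $s_1 = t s_2$ with $t = s_1/s_2 \in (0,1)$, applying convexity at $0$ along the segment from $0$ to $s_2$, and dividing by $s_1$. Nothing is missing; you have simply supplied the details the authors omitted.
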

\begin{proof}
	The proof is obvious.
\end{proof}

The following theorem is analogous to Theorem \ref{theorem16*,p2} for IVF.

\begin{theorem}\label{theorem16**,p2}
	Let $E\subseteq (M,g)$ be star-shaped at $x_0 \in E$ and $f:E\rightarrow \mathbb{I}$ be an IVF with $f(x)= \langle f^c(x),~f^w(x)\rangle$. Let $\gamma(s);~s\in I, ~0\in I~ \text{and}~ \gamma(I)\subseteq E$, be a geodesic for which $\gamma(0)=x_0, ~ \&~ \dot{\gamma}(0)=X_{x_0}$ such that $(f^w \circ \gamma)(s)$ is non-decreasing for $s\in [0, \infty)$. If $f$ is cw-convex at $x_0$, then
	\begin{itemize}
		\item [(i)] for a fixed $X_{x_0} \in T_{x_0}(E)$, the function $Q:I\cap(0,\infty) \rightarrow \mathbb{R}$, defined by
		$$Q(s)=\frac{f(\gamma(s))\ominus_{gH}f(x_0)}{s},$$
		is non-decreasing;
		\item [(ii)] if $Df(x_0;~X_{x_0})$ exists, then 
		\begin{itemize}
			\item [(a)] $Df(x_0;~X_{x_0})= \lim\limits_{s\rightarrow 0^+}Q(s) = \langle Df^c(x_0;X_{x_0}),~Df^w(x_0;X_{x_0}) \rangle,$\\
			where,  $Df^w(x_0;X_{x_0})\geq 0$;
			\item [(b)] the IVF $h(X_{x_0}) = Df(x_0;~X_{x_0})$, defined on $T_{x_0}(E)$ is positively homogeneous.
		\end{itemize}
	\end{itemize}
\end{theorem}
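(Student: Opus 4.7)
The plan is to exploit the cw-convexity hypothesis together with the monotonicity of $(f^w \circ \gamma)$ to reduce the statement to facts about the real-valued functions $f^c\circ\gamma$ and $f^w\circ\gamma$ on $[0,\infty)$, and then invoke the already established Lemma \ref{lemma22,p2} for the decomposition of $Df(x_0;X_{x_0})$.

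\textbf{Part (i).} I begin by writing $Q(s)$ in center/half-width form. By Lemma \ref{lemma13,p2} applied with scalar $1/s > 0$ (and Lemma \ref{lemma4,p2}),
\[
Q(s) = \left\langle \frac{f^c(\gamma(s))-f^c(x_0)}{s},\ \frac{|f^w(\gamma(s))-f^w(x_0)|}{s}\right\rangle.
\]
The monotonicity assumption $(f^w\circ\gamma)(s)\geq (f^w\circ\gamma)(0)=f^w(x_0)$ for $s\geq 0$ lets me drop the absolute value in the half-width coordinate. Since $f$ is cw-convex at $x_0$, Lemma \ref{lemma4**,p2} gives that both $f^c\circ\gamma$ and $f^w\circ\gamma$ are convex at $0$ on $[0,1]$ (the argument localizes, so this transfers to $I\cap[0,\infty)$). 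Then Lemma \ref{lemmaslope,p2} shows that for $0<s_1<s_2$,
\[
\frac{f^c(\gamma(s_1))-f^c(x_0)}{s_1}\leq \frac{f^c(\gamma(s_2))-f^c(x_0)}{s_2}, \qquad \frac{f^w(\gamma(s_1))-f^w(x_0)}{s_1}\leq \frac{f^w(\gamma(s_2))-f^w(x_0)}{s_2}.
\]
To conclude $Q(s_1)\leq^{\min}Q(s_2)$ I split into the two cases demanded by the definition of $\leq^{\min}$: if the centers are strictly ordered, done; if the centers are equal, the half-width inequality above gives the conclusion.

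\textbf{Part (ii)(a).} Assuming $Df(x_0;X_{x_0})$ exists, the equality $Df(x_0;X_{x_0})=\lim_{s\to 0^+}Q(s)$ is immediate from Definition \ref{Definition4.2,p2}. The decomposition
\[
Df(x_0;X_{x_0})=\langle Df^c(x_0;X_{x_0}),\ Df^w(x_0;X_{x_0})\rangle,\qquad Df^w(x_0;X_{x_0})\geq 0,
\]
is exactly the content of Lemma \ref{lemma22,p2}, whose hypotheses are in force here thanks to the monotonicity assumption on $(f^w\circ\gamma)$.

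\textbf{Part (ii)(b).} For positive homogeneity I mimic the argument used in Theorem \ref{theorem16*,p2}(ii)(b). For $\lambda\geq 0$, the geodesic emanating from $x_0$ with initial velocity $\lambda X_{x_0}$ is the reparametrization $s\mapsto \gamma(\lambda s)$. Substituting $u=\lambda s$ and using that $\frac{1}{s}(\,\cdot\,\ominus_{gH}\,\cdot\,)=\lambda\cdot\frac{1}{u}(\,\cdot\,\ominus_{gH}\,\cdot\,)$ (by Lemma \ref{lemma4,p2} applied with $\alpha=\lambda\geq 0$), I obtain
\[
h(\lambda X_{x_0})=Df(x_0;\lambda X_{x_0})=\lim_{s\to 0^+}\frac{f(\gamma(\lambda s))\ominus_{gH}f(x_0)}{s}=\lambda\lim_{u\to 0^+}\frac{f(\gamma(u))\ominus_{gH}f(x_0)}{u}=\lambda\, h(X_{x_0}).
\]

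The main obstacle is part (i): one must be careful that the order $\leq^{\min}$ is not coordinatewise but lexicographic-with-tie-breaker on the half-width, so both coordinate inequalities from Lemma \ref{lemmaslope,p2} are genuinely needed, and the case split is unavoidable. The remaining parts are essentially bookkeeping on top of Lemmas \ref{lemma4**,p2}, \ref{lemmaslope,p2}, \ref{lemma22,p2}, and \ref{lemma4,p2}.
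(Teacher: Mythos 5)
Your proposal is correct and follows essentially the same route as the paper: center/half-width decomposition of $Q$ via Lemmas \ref{lemma13,p2} and \ref{lemma4,p2}, convexity of $f^c\circ\gamma$ and $f^w\circ\gamma$ at $0$ via Lemma \ref{lemma4**,p2}, the slope inequalities of Lemma \ref{lemmaslope,p2} with the tie/no-tie case split forced by $\leq^{\min}$, and Lemma \ref{lemma22,p2} for (ii)(a). The only cosmetic difference is in (ii)(b), where you reparametrize the interval-valued difference quotient directly instead of first decomposing into components and invoking Theorem \ref{theorem16*,p2}(ii)(b) coordinatewise as the paper does; both arguments are equivalent.
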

\begin{proof}
	{\it (i)} Let $x\in E$ be arbitrary and $\gamma_{\2}(s), ~s\in [0,1]$ be a geodesic joining $x_0$ with $x$. From Definition \ref{definition**,p2} and Lemma \ref{lemma4**,p2}, we have
	$$g^c(s) := (f^c\circ \gamma_{\2})(s) ~~~\text{and}~~~ g^w(s) := (f^w\circ \gamma_{\2})(s), ~~~s\in[0,1],$$
	are convex at $s=0$.
	Let $B:=\{(s_1,s_2)\in (0,1] \times (0,1] : s_1 <s_2\}$. Then for any $(s_1,s_2)\in B$, from Lemma \ref{lemmaslope,p2}, we have
	\begin{align}
		\frac{g^c(s_1)-g^c(0)}{s_1} &\leq \frac{g^c(s_2)-g^c(0)}{s_2}\notag\\
		\& ~~~ \frac{g^w(s_1)-g^w(0)}{s_1} &\leq \frac{g^w(s_2)-g^w(0)}{s_2},\notag\\
		\Rightarrow~~~ \frac{f^c(\gamma_{\2}(s_1))-f^c(x_0)}{s_1} &\leq \frac{f^c(\gamma_{\2}(s_2))-f^c(x_0)}{s_2} \label{eq16,p2}\\
		\&~~~ \frac{f^w(\gamma_{\2}(s_1))-f^w(x_0)}{s_1} &\leq \frac{f^w(\gamma_{\2}(s_2))-f^w(x_0)}{s_2},~~~~\forall~(s_1,s_2)\in B. \label{eq17,p2}
		\intertext{Let $\displaystyle  T=\left\{(s_1,s_2)\in B:\frac{f^c(\gamma_{\2}(s_1))-f^c(x_0)}{s_1} = \frac{f^c(\gamma_{\2}(s_2))-f^c(x_0)}{s_2} \right\}$, then from (\ref{eq16,p2}) and (\ref{eq17,p2}), we can deduce that} 	
		\frac{f^c(\gamma_{\2}(s_1))-f^c(x_0)}{s_1} &< \frac{f^c(\gamma_{\2}(s_2))-f^c(x_0)}{s_2},~~~ \forall ~~ (s_1,s_2)\in B\setminus T\label{eq18,p2}\\
		\&~~~ \frac{f^w(\gamma_{\2}(s_1))-f^w(x_0)}{s_1} &\leq \frac{f^w(\gamma_{\2}(s_2))-f^w(x_0)}{s_2},~~~~\forall~(s_1,s_2)\in T.\label{eq19,p2}\\
		\intertext{As $(f^w \circ \gamma_{\2})(s)$ is non-decreasing for $s>0$, (\ref{eq19,p2}) becomes}
		\frac{|f^w(\gamma_{\2}(s_1))-f^w(x_0)|}{s_1} &\leq \frac{|f^w(\gamma_{\2}(s_2))-f^w(x_0)|}{s_2},~~~~\forall~(s_1,s_2)\in T.\label{eq20,p2}\\
		\intertext{From Lemma \ref{lemma13,p2} and order relation (\ref{orderrelation,p2}), together with (\ref{eq18,p2}) and (\ref{eq20,p2}), we have}
		\frac{f(\gamma_{\2}(s_1))\ominus_{gH}f(x_0)}{s_1} &\leq^{\text{min}} \frac{f(\gamma_{\2}(s_2))\ominus_{gH}f(x_0)}{s_2}.\notag
	\end{align}
	Since $x\in E$ is arbitrary, so for any $\gamma(s);~s\in I\cap(0,\infty), ~0\in I~ \&~ \gamma(I\cap(0,\infty))\subseteq E$, emanating from $\gamma(0)=x_0,$ with $\dot{\gamma}(0)=X_{x_0}$, we have
	\begin{align*}
		\frac{f(\gamma(s_1))\ominus_{gH}f(x_0)}{s_1} &\leq^{\text{min}} \frac{f(\gamma(s_2))\ominus_{gH}f(x_0)}{s_2}, ~~~ 0<s_1<s_2, ~~ s_1,s_2 \in I\cap (0, \infty), \notag\\
		\text{i.e.,}~~~~~~ Q(s_1) &\leq^{\text{min}} Q(s_2).\notag
	\end{align*}
	Since as $s_1,s_2\in I\cap(0,\infty)$  is arbitrary, we conclude that $Q(s)$ is non-decreasing.
	\vspace{0.30cm}
	
	\noindent {\it (ii)(a)} The proof follows directly from the Definition \ref{Definition4.2,p2} and Lemma \ref{lemma22,p2}.
	\vspace{0.05cm}
	
	\noindent {\it (ii)(b)} By hypothesis, we have 
	\begin{align*}
		h(X_{x_0}) &= Df(x_0;X_{x_0})\\
		&=\langle Df^c(x_0;X_{x_0}); |Df^w(x_0;X_{x_0})| \rangle.\\
		\intertext{So, for any $X_{x_0} \in T_{x_0}(E)$ and any $\lambda\geq 0,$ we have for sufficiently small $s \in I \cap (0, \infty)$ that}
		h(\lambda X_{x_0}) &= \langle Df^c(x_0;\lambda X_{x_0}); |Df^w(x_0;\lambda X_{x_0})| \rangle,\\
		\intertext{which by Theorem \ref{theorem16*,p2}, gives}
		h(\lambda X_{x_0})  &= \langle \lambda Df^c(x_0;X_{x_0});\lambda |Df^w(x_0;X_{x_0})| \rangle.\\
		\intertext{This together with Lemma \ref{lemma4,p2} yields}
		h(\lambda X_{x_0}) &= \lambda \langle Df^c(x_0;X_{x_0}); |Df^w(x_0;X_{x_0})| \rangle,\\
		h(\lambda X_{x_0}) &=\lambda h(X_{x_0}).
	\end{align*}
\end{proof}
\begin{remark}
	\rm Similar to Part $(ii) (a)$ in Theorem \ref{theorem16*,p2}, one might anticipate that Part $(ii) (a)$ in Theorem \ref{theorem16**,p2} would be:
	$$Df(x_0,X_{x_0}) = \underset{s}{\inf} ~Q(s).$$
	However, this equation is generally invalid because the order relation $\leq^{\text{min}}$ is not complete. The issue of the order incompleteness of $\leq^{\text{min}}$ has already been addressed in Remark \ref{remark2,p2}. The following example illustrates our assertion.
\end{remark}

\begin{example}
	\rm	Define $f: \mathbb{R} \rightarrow \mathbb{I}$ as follows:
	$$f(x) = \langle x^2, 1\rangle.$$
	Clearly, $f$ satisfies every hypothesis of the Theorem \ref{theorem16**,p2}. Choosing $x_0=0,~X_{x_0} = 1$ and $\gamma(s) = s$. We have
	$$Q(s) = \langle s, 0\rangle, ~~ s>0.$$
	Clearly, 
	$$Df(x_0,X_{x_0}) = \lim\limits_{s \rightarrow 0^+} ~Q(s) = \langle0,0\rangle.$$
	It is worth noting that $\langle0,0\rangle \leq^\text{min} Q(s)$ for any $s>0$. Hence, $\langle0,0\rangle$ is a lower bound of $Q(s)$. But the infimum of $Q(s)$ over $s>0$ doesn't exist.
\end{example}

In view of Lemma \ref{lemma22,p2}, we put forward the following theorem which gives a necessary condition for a gH-directionally differentiable cw-convex IVF.

\begin{theorem} \label{theorem23,p2}
	Let $E\subseteq (M,g)$ be star-shaped at $x_0 \in E$ and $f:E\rightarrow \mathbb{I}$ be an IVF with $f(x)= \langle f^c(x),~f^w(x)\rangle$. Let $\gamma(s);~s\in I, ~0\in I~ \text{and}~ \gamma(I)\subseteq E$, be any geodesic for which $\gamma(0)=x_0, ~ \&~ \dot{\gamma}(0)=X_{x_0}\in T_{x_0}(E)$ such that $(f^w \circ \gamma)(s)$ is non-decreasing for $s\in [0, \infty)$. Suppose that $f$ is gH-directionally differentiable at $x_0$, then
	\begin{enumerate}
		\item [i)] if $f$ is cw-convex  at $x_0$, then
		\begin{equation}\label{eq26,p2}
			Df(x_0;X_{x_0}) \leq^{\text{min}} f(x)\ominus_{gH}f(x_0), ~~ \forall ~ x\in E ~~\text{and}~~ \forall ~\gamma_{\2}\in \Gamma_0,
		\end{equation}
	where $\Gamma_0$ is the collection of all geodesics joining $x_0$ and $x$ such that $\gamma_{\2}(0)=x_0$ and $\dot{\gamma}_{\2}(0)=X_{x_0}\in T_{x_0}(E).$
	\item [ii)] if $f$ is strictly cw-convex  at $x_0$, then
	$$Df(x_0;X_{x_0}) <^{\text{min}} f(x)\ominus_{gH}f(x_0), ~~ \forall ~ x\in E,~~x\neq x_0, ~~\text{and}~~ \forall ~\gamma_{\2}\in \Gamma_0,$$
	with $\gamma_{\2}(0)=x_0$ and $\dot{\gamma}_{\2}(0)=X_{x_0}\in T_{x_0}(E).$
	\end{enumerate}
\end{theorem}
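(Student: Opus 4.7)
The plan is to reduce the claimed interval inequality to the two scalar inequalities supplied by Theorem~\ref{theorem30,p2} applied separately to $f^c$ and $f^w$, and then read off the conclusion directly from the definition of $\leq^{\text{min}}$ in (\ref{orderrelation,p2}). First, I would unpack both sides of (\ref{eq26,p2}) into their center and half-width components. Lemma~\ref{lemma22,p2} gives
$$Df(x_0;X_{x_0})=\langle Df^c(x_0;X_{x_0}),\,Df^w(x_0;X_{x_0})\rangle,\qquad Df^w(x_0;X_{x_0})\ge 0,$$
and in particular guarantees that both $Df^c(x_0;X_{x_0})$ and $Df^w(x_0;X_{x_0})$ exist. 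On the right-hand side, Lemma~\ref{lemma13,p2} gives
$$f(x)\ominus_{gH}f(x_0)=\langle f^c(x)-f^c(x_0),\,|f^w(x)-f^w(x_0)|\rangle.$$
Because $\gamma_{\2}\in\Gamma_0$ shares the initial point and initial velocity of the geodesic $\gamma$ in the hypothesis, uniqueness of geodesics identifies $\gamma_{\2}$ with $\gamma$ on $[0,1]$, so the non-decreasing property of $f^w\circ\gamma$ yields $f^w(x)=(f^w\circ\gamma)(1)\ge(f^w\circ\gamma)(0)=f^w(x_0)$, and the absolute value drops out.

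Next, because $f$ is cw-convex at $x_0$, both scalar functions $f^c$ and $f^w$ are convex at $x_0$, so Theorem~\ref{theorem30,p2}(i) (whose directional-derivative hypothesis is met by the previous step) yields
\begin{align*}
f^c(x)-f^c(x_0) &\geq Df^c(x_0;X_{x_0}),\\
f^w(x)-f^w(x_0) &\geq Df^w(x_0;X_{x_0}).
\end{align*}
To pass to (\ref{eq26,p2}), I would split on the sign of $f^c(x)-f^c(x_0)-Df^c(x_0;X_{x_0})$. If this quantity is strictly positive, the first clause of (\ref{orderrelation,p2}) yields $Df(x_0;X_{x_0})\leq^{\text{min}} f(x)\ominus_{gH}f(x_0)$ independently of the widths. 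If it vanishes, the two centers coincide and the second clause of (\ref{orderrelation,p2}) applies, closed by the already-established half-width inequality.

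For part (ii), strict cw-convexity at $x_0$ upgrades both scalar inequalities to strict ones via Theorem~\ref{theorem30,p2}(ii) whenever $x\ne x_0$. In particular $f^c(x)-f^c(x_0)>Df^c(x_0;X_{x_0})$, so the centers of the two intervals differ; hence the intervals themselves differ, and the first clause of (\ref{orderrelation,p2}) delivers $Df(x_0;X_{x_0})<^{\text{min}} f(x)\ominus_{gH}f(x_0)$. The only mild obstacle I anticipate is ensuring that the non-decreasing hypothesis, phrased along the particular geodesic $\gamma$ with $\dot{\gamma}(0)=X_{x_0}$, really transfers to each $\gamma_{\2}\in\Gamma_0$ used in the conclusion; once one observes that $\gamma_{\2}$ is a reparametrized initial segment of $\gamma$ by uniqueness of geodesics with prescribed initial data, the remainder is a book-keeping exercise across the two component functions.
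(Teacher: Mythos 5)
Your proposal is correct and follows essentially the same route as the paper's proof: decompose both sides via Lemma \ref{lemma22,p2} and Lemma \ref{lemma13,p2}, invoke the scalar convexity inequalities of Theorem \ref{theorem30,p2} for $f^c$ and $f^w$, use the non-decreasing hypothesis on $f^w\circ\gamma$ to control the signs (and hence the absolute values) of the half-width terms, and finish with a case split on whether the center inequality is strict or an equality, exactly as the paper does with its set $T$. Your explicit remarks on identifying $\gamma_{\2}$ with $\gamma$ by uniqueness of geodesics and on dropping the absolute value on the right-hand side are points the paper leaves implicit, but they do not change the argument.
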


\begin{proof}
	From Lemma \ref{lemma22,p2}, we have
	\begin{equation} \label{eq27,p2}
		Df(x;X_x)~ = ~\langle Df^c(x;X_x),~|Df^w(x;X_x)|\rangle.
	\end{equation}
From the convexity of $f^c$ and $f^w$, we have
\begin{align}
	Df^c(x_0;X_{x_0}) &\leq f^c(x) - f^c(x_0), ~~ \forall~x \in E,\label{eq28,p2}\\
   	Df^c(x_0;X_{x_0}) &\leq f^w(x) - f^w(x_0), ~~ \forall~x \in E.\label{eq29,p2}
\end{align}
    Let $T:= \{x\in E:Df^c(x_0;X_{x_0}) \leq f^c(x) - f^c(x_0)\},$ then from (\ref{eq28,p2}) and (\ref{eq29,p2}), we can deduce that
\begin{align}
    	Df^c(x_0;X_{x_0}) &< f^c(x) - f^c(x_0), ~~ \forall~x \in E\setminus T,\label{eq30,p2}\\
    	Df^w(x_0;X_{x_0}) &\leq f^w(x) - f^w(x_0), ~~ \forall~x \in T.\label{eq31,p2}
\end{align}
Also, by the hypothesis, we have $(f^w\circ \gamma_{\2})(s)$ is non-decreasing for $s \geq 0$. So,
$$Df^w(x_0;X_{x_0}) = \lim\limits_{s\rightarrow0^+} \frac{f^w(\gamma_{\2}(s))-f^w(\gamma_{\2}(0))}{s}\geq 0.$$
This together with (\ref{eq31,p2}), yields
\begin{equation} \label{eq32,p2}
	|Df^w(x_0;X_{x_0})| \leq |f^w(x) - f^w(x_0)|, ~~ \forall~x \in T.
\end{equation}
From order relations (\ref{orderrelation,p2}), together with (\ref{eq30,p2}) and (\ref{eq32,p2}), for any $x\in E$ we conclude that
$$\langle Df^c(x_0;X_{x_0});~|Df^w(x_0;X_{x_0})| \rangle \leq^{\text{min}} \langle f^c(x) - f^c(x_0);~ |f^w(x) - f^w(x_0)| \rangle,$$
which from Lemma \ref{lemma13,p2} and (\ref{eq27,p2}) yields
$$Df(x_0;X_{x_0}) \leq^{\text{min}} f(x)\ominus_{gH}f(x_0), ~~ \forall ~ x\in E.$$
\end{proof}

%\begin{remark}\rm
%	The converse of Theorem \ref{theorem23,p2}	is not true in general. Moreover, in Theorem \ref{theorem23,p2}, if the condition, ``$(f^w\circ \gamma)(t)$ is non-decreasing," is removed, then the result may not hold true in general. Following are a few counter examples in support of these said arguments.
%\end{remark}

The following example supports the fact that the removal of the non-decreasing condition of $(f^w\circ \gamma)(s)$ makes Theorem \ref{theorem23,p2} invalid, in general.

\begin{example} \label{example24,p2} 
		{\rm \bfseries Manifold (or Cone) of symmetric positive definite matrices: }\\
	\rm 
	The collection, $S^n_{++}$, of $n \times n$ symmetric positive definite matrices with real entries forms a Hadamard manifold with Riemannian metric:
	$$g_P(X,Y) = Tr(P^{-1}XP^{-1}Y), ~~~ \forall~ P \in S^n_{++}, ~~~ X,Y \in T_P(S^n_{++}).$$
	The minimal geodesic joining $P, Q \in S^n_{++}$ is given by 
	$$\gamma(s) = P^{\frac{1}{2}}(P^{-\frac{1}{2}}QP^{-\frac{1}{2}})^sP^{\frac{1}{2}}, ~~~ \forall ~ s \in [0,1].$$
	For more details, one can refer to \cite{bacak,serge}.\\
For $n=2$, we define IVF $f:S^2_{++} \rightarrow \mathbb{I}$ as 
$$f(x)=\langle \ln(\det(x)),~(\ln(\det(x)))^2 \rangle.$$
For any $x,y \in S^2_{++}$, the geodesic joining them is given by
$$\gamma_{\1}(s) = x^{\frac{1}{2}}\left(x^{-\frac{1}{2}}yx^{-\frac{1}{2}}\right)^sx^{\frac{1}{2}}, ~~~ \forall ~ s \in [0,1].$$
One can show that both $f^c$ and $f^w$ are convex on $S^2_{++}$,  and hence $f$ is cw-convex on $S^2_{++}$. Also, one can show that $f$ is gH-directionally differentiable on $S^2_{++}$ with gH-directional derivative at any $x$ in any direction $X_x=\dot{\gamma}_{\1}(0)$ as
$$Df(x;X_x) = \left\langle \ln(\det(y))-\ln(\det(x)),~ |2\ln(\det(x))[\ln(\det(y))-\ln(\det(x))]| \right\rangle.$$
We now choose $x=\frac{1}{2}I_2$ and $y=I_2$, then 
$$Df^c(x;X_x)\approx 1.38,~~~ \text{and}~~~ Df^w(x;X_x)\approx -3.81.$$
Since as $Df^w(x;X_x)<0$, we have $(f^w\circ \gamma_{\1})(s), ~s\in[0,1]$ is decreasing.
\text{Also}, we have the following:
\begin{align*}
	Df(x;X_x) &= \langle 1.38,~ 3.81 \rangle.\\
\text{and}~~~ f(y) \ominus_{gH} f(x) &=\langle 1.38,~ 1.90 \rangle.
\end{align*}
Hence, by order relation (\ref{orderrelation,p2}), we have 
$$f(y) \ominus_{gH} f(x) \leq^{\text{min}} Df(x;X_x),$$
which clearly contradicts with (\ref{eq26,p2}).
\end{example}

\begin{remark}\rm 
	One should note that the condition of being non-decreasing in $[0, \infty)$ for $(f^w\circ \gamma)(t)$ is not merely for the existence  of directional derivative of $f^w$ as said in Lemma \ref{lemma22,p2} but it also assures that $Df^w(x_0;X_{x_0})\geq 0$ which results in the validity of relation (\ref{eq26,p2}) and (\ref{eq27,p2}).
\end{remark}

In general, it is not possible to invert Theorem \ref{theorem23,p2}, as illustrated in the following example.

\begin{example}\rm 
Consider the Riemannian manifold \( \mathit{M} \) as defined in Example \ref{exg4.2} and choose $x_0 = e^{i\frac{\pi}{2}}=i$ then The geodesic segment joining $x=i$ with any $x=e^{i\theta} \in M$ is given by
$$\gamma_{\2}(s) = e^{i((1-s)\frac{\pi}{2}+s\theta)}, ~~~ s\in [0,1],$$	
with $\dot{\gamma}_{\2}=i(\theta-\frac{\pi}{2}).$
\vspace{0.15cm}

Let $f:M\rightarrow \mathbb{I}$, be defined as follows:
$$f(x)=f(e^{i\theta}) = \left \langle \theta^2,~ln\left((\theta-\frac{\pi}{2})^2+e\right) \right \rangle.$$
Here, $f$ is gH-directionally differentiable at $x_0$ with gH-directional derivative as
$$Df(x_0;X_{x_0}) =\left\langle \pi\left(\theta - \frac{\pi}{2}\right),~ 0 \right\rangle.$$
Here, $f^c$ is strictly convex at $x_0=i$. So, for any $x=e^{i\theta}$ and the geodesic segment $\gamma_{\2} \in \Gamma_0$, we have
$$Df^c(x_0;X_{x_0}) <  f^c(x) - f^c(x_0),~~ \forall~x\in M,~~x\neq x_0, ~~\gamma_{\2}\in \Gamma_0.$$
From order relations (\ref{orderrelation,p2}) and Lemma \ref{lemma13,p2}, we have
$$Df(x_0;X_{x_0}) \leq^{\text{min}}  f(x) \ominus_{gH} f(x_0),~~ \forall~x\in M,~~x\neq x_0, ~~\gamma_{\2}\in \Gamma_0.$$
Now, for any $\gamma_{\2}(s) = e^{i((1-s)\frac{\pi}{2}+s\theta)} \in \Gamma_0$, we have
\begin{align*}
	(f^w \circ \gamma_{\2})(s)&=ln\left((\theta-\frac{\pi}{2})^2s^2 + e\right)\\
	\text{and}~~~ (f^w \circ \gamma_{\2})'(s) &= \frac{2(\theta - \frac{\pi}{2})^2s}{(\theta -\frac{\pi}{2})^2s^2 +e} \geq 0, ~~~ \forall ~ s \in [0,1].
\end{align*}
This shows $f^w \circ \gamma_{\2}$ is non-decreasing for $s \geq 0$ for any $\gamma_{\2} \in \Gamma_0$.
\vspace{0.15cm}

But $f^w$ is not convex at $x_0=i$. For this, one can check the following
$$f^w\left(\gamma_{\2}(\frac{1}{2})\right) > \frac{1}{2}f(e^{i\frac{\pi}{2}}) + \frac{1}{2}f(e^{i\frac{3\pi}{2}}).$$
This shows that $f$ is not cw-convex at $x_0=i$.
\end{example}

\section{Conclusion}
In this article, we have established that gH-directional differentiability of an IVF can be expressed in terms of its center function, half-width function, and endpoint functions, but only for a specific category of functions. We have employed a total order relation for the collection of closed and bounded intervals of $\mathbb{R}$ throughout our analysis. Our research represents an initial step towards the development of KKT-type optimality conditions for optimization programming problems based on gH-directional differentiability in the context of a total order relation. These findings can be extended to fuzzy environments, and they have potential applications in verifying the convexity of an IVF, as well as in creating numerical methods for solving optimization programming problems.

\section{Appendix A}
Here we present the counter examples of Remark 2.1(i) in \cite{chen} and Lemma 3.2 in \cite{chen}.
   
   The following example assures that the Remark 2.1(i) in \cite{chen} is false.
   
   \begin{example} \label{example6.1,p2} \rm
   	Let $M=\mathbb{R}_{++}=\{x\in \mathbb{R}: x >0\}$ be a manifold with Riemannian metric given by
   	$$\langle v,w \rangle_x = \frac{1}{x^2}vw, ~~~ \forall~v,w \in T_x(M)=\mathbb{R}.$$
   	It is known that $M$ is a Hadamard manifold. For any $x \in M$ and $v\in T_x(M)$, the geodesic $\gamma:\mathbb{R} \rightarrow M$ emanating from $x=\gamma(0)$ in the direction $\gamma'(0)=v$ is given by
   	$$\gamma(s)=\text{exp}_x(sv)=xe^{(\frac{v}{x})s}.$$
   	 	Define $f:M \rightarrow \mathbb{R}$, as following:
   	$$f(x)=\begin{cases}
   		1, &x\leq 1;\\
   		-\text{ln}(x), &x>1.
   	\end{cases}$$
    We choose $x=1$, then the geodesic segment $\gamma_{\1}(s),~ s \in [0,1]$, joining $x=1$ with any $y\in M$ is given by
   $$\gamma_{\1}(s)=y^s.$$
   So, 
\begin{align*}
	   (f\circ \gamma_{\1})(s) &= \begin{cases}
   	1, &y\leq 1;\\
   	-s \cdot \text{ln}(y), &y>1.
   \end{cases}\\
   	\intertext{Also,} \\
   	(1-s)f(x) + s f(y) &=\begin{cases}
   		1, &y\leq 1;\\
   		(1-s) - s\cdot\text{ln}(y), &y>1.
   	\end{cases}
\end{align*}
It is easy to see that for $x=1$, and any $y\in M$, we have the following:
$$(f\circ \gamma_{\1})(s)\leq (1-s)f(x) + s f(y), ~~ \forall~ s \in [0,1].$$
Hence, we conclude that $f$ is $\text{convex}^{\dagger}$\let\thefootnote\relax\footnotetext{$\dagger$The definition of convexity is given by Definition 2.4 in \cite{chen} as following:\\ Let $M$ be a Hadamard manifold and $D\subseteq M$ be a nonempty open geodesic convex set. A function $f: D \rightarrow \mathbb{R}$ is said to be convex at $x \in D$, if for any $y \in D$,
	$$f(\gamma(s))\leq s f(y) + (1-s)f(x), ~~~ \text{for all} ~ s \in[0,1].$$
	Where $\gamma : [0,1] \rightarrow D$ is a geodesic with $\gamma(0) = x$ and $\gamma(1) = y$.} at $x=1$.
   	
   	Also, directional derivative*\let\thefootnote\relax\footnotetext{*Directional derivative is given by Definition 2.3 in \cite{chen} as following:\\ Let $M$ be a Hadamard manifold, let $D \subseteq M$ be a nonempty open set, and let $f:D \rightarrow \mathbb{R}$ be a function. We say that $f$ is directionally differentiable at a point $x \in D$ in the direction of $v \in T_x(M)$ if the limit 
   			$$f'(x;v) = \lim\limits_{s \rightarrow 0^+} \frac{f(\exp_x ~ s v)-f(x)}{s}$$
   			exists, where $f'(x;v)$ is called the directional derivative of $f$ at $x$ in the direction of $v \in T_x(M).$  If $f$ is directionally differentiable at $x$ in every direction $v \in T_x(M)$, we say that $f$ is directionally differentiable at $x$.}
 of $f(x)$ at $x=1$ in any positive direction $v \in \mathbb{R}$ doesn't exist finitely. In particular, choose $y=2$. The geodesic segment $\gamma_{\1}(s)=2^s, ~t \in [0,1],$ joins $x=\gamma_{\1}(0)=1$ with $y=\gamma_{\1}(1)=2$. The directional derivative of $f$ at $x=\gamma_{\1}(0)=1$ in the direction $\gamma'_{\1}(0)=\text{ln}(2)$ is
   	$$f'(1;\text{ln}(2))=\lim\limits_{t\rightarrow0^+} \frac{f(2^s)-f(1)}{s}= -\text{ln}(2)-\lim\limits_{s\rightarrow0^+} \frac{1}{s}=-\infty,$$
   	which is not finite. Hence, $f$ is not directionally differentiable at $x=1.$
   \end{example}

%Next, the following example assures that the sufficient condition for a differentiable function to be convex at a point $x^* \in X \subset\mathbb{R}^n$ given in statement ii) is false.
%
%\begin{example} \label{example6.2,p2}
%	\rm Consider $f:\mathbb{R} \rightarrow \mathbb{R}$, defined by
%	$$f(x)=ln(x^2+e).$$
%	$f$ is not convex at $x^*=0$ as is evident from the following:
%	
%	For $x^*=0$, choose  $x=10$ and $t=\frac{1}{2}$, then
%	$$f(\frac{1}{2}x^* + \frac{1}{2}x) > \frac{1}{2}f(x^*) + \frac{1}{2}f(x).$$
%	Although, one can verify the following relation:
%	$$f(x)-f(x^*) \geq f'(x^*)(x-x^*), \hspace{0.3cm} \text{for any} ~ x \in \mathbb{R},$$
%	holds true at $x^*=0$. 
%	
%	The plot of $f$ is as following:
%		\begin{center}
%		\begin{tikzpicture}
%			\begin{axis}[xmin=-15, xmax=15, ymin=-1,ymax=5, axis lines=middle, xlabel=$X$, ylabel=$Y$]
%				\addplot[color=blue, samples=1000,thick,domain=-15:15] {ln(x^2+e)} node [pos=0.35,left] {$y=ln(x^2+e)$} coordinate [pos=0.8] (A);
%				\addplot[<->,red,dashed,domain=-15:15] {1} node [blue,pos=0.5, below right] {(0,1)};
%				\addplot[red,dashed] (0,1) -- (A);
%			\end{axis}
%		\end{tikzpicture}
%	\end{center}
%\end{example}
%
%From the above graph, one can easily see that the plot of graph of $f$ lies completely above tangent line at (0,1), but the chord joining (0,1) with any point on the curve doesn't always lie above the arc joining them.
%\vspace{0.15cm}

Next, the following example demonstrates that the converse statement of Lemma 3.2 in \cite{chen} does not hold in general.

\begin{example}\rm 
	We consider the same manifold $S^n_{++}$ as presented in Example \ref{example24,p2}. Let $n=2$, we have $S^2_{++}$ as the Hadamard manifold. For $P=I_2$ and $Q=2I_2$, where $I_2$ is a $2 \times 2$ identity matrix, the geodesic segment joining $P$ and $Q$ is given by
	$$ \gamma_{pq}(s) = I_2^{\frac{1}{2}}(I_2^{-\frac{1}{2}}(2I_2)I_2^{-\frac{1}{2}})^sI_2^{\frac{1}{2}}, ~~~ \forall ~ s \in [0,1],$$ 
	$$ i.e., ~~~~~ \gamma_{pq}(s) = 2^sI_2, ~~~ \forall ~ s \in [0,1].~~~~~~~~~~~~~~~~~~~~~~~~~~~~~$$
	Here, $\gamma_{pq}(0) = I_2 =P$ and it easy to see $\dot{\gamma}_{pq}(0)= \ln(2)I_2 = V \in T_P(S^2_{++})$\\
	Define $f: S^2_{++} \rightarrow \mathbb{I}$, as follows
	\begin{align*}
		f(x)&= [f^l(x), f^U(x)]\\
		&= \begin{cases}
			[1,2], & x \in S^2_{++} \backslash \gamma_{pq}(s),~ t\in [0,1],\\
			[0,~ \det(x)], & x \in \gamma_{pq}(s),~ s \in [0,1]\cap \mathbb{Q},\\
			[\ln(\det(x)),~ (\ln(\det(x))^2 +1], & x \in \gamma_{pq}(s),~s \in  [0,1]\cap \mathbb{R} \backslash \mathbb{Q}.
		\end{cases}
	\end{align*}
	The lower and upper end point functions are, respectively, given by
	$$f^l(x)= \begin{cases}
		1, & x \in S^2_{++} \backslash \gamma_{pq}(s),~ s\in [0,1],\\
		0, & x \in \gamma_{pq}(s),~ s \in [0,1]\cap\mathbb{Q},\\
		\ln(\det(x)), & x \in \gamma_{pq}(s), ~ s \in [0,1]\cap\mathbb{R}\backslash\mathbb{Q}.
	\end{cases}$$ 
	$$f^u(x)= \begin{cases}
		2, & x \in S^2_{++} \backslash \gamma_{pq}(s), ~s\in[0,1],\\
		\det(x), & x \in \gamma_{pq}(s),~s \in [0,1]\cap\mathbb{Q},\\
		(\ln(\det(x)))^2 +1, & x \in \gamma_{pq}(s),~ s \in [0,1]\cap\mathbb{R}\backslash\mathbb{Q}.
	\end{cases}$$
	The limit of the slope function of secants to $f^l$ at $P=\gamma_{pq}(0)=I_2$ in the direction $V=\dot{\gamma}_{pq}(0)=\ln(2)I_2 \in T_P(S^2_{++})$ is given by
	\begin{align*}
		(f^l)'(P;V)=\lim\limits_{s\rightarrow0^+} \frac{f^l(\gamma(s))-f^l(P)}{s}
		&= \lim\limits_{s\rightarrow0^+} \frac{f^l(2^sI_2)-f^l(\gamma(0))}{s}\\
		&= \lim\limits_{s\rightarrow0^+} \begin{cases}
			0, & s \in [0,1]\cap\mathbb{Q},\\
			\frac{\ln(\det(2^sI_2))}{s}, & s \in [0,1]\cap\mathbb{R}\backslash\mathbb{Q},
		\end{cases}\\
		&= \lim\limits_{s\rightarrow0^+} \begin{cases}
			0, & s \in [0,1]\cap\mathbb{Q},\\
			\frac{\ln(2^s)^2)}{s}, & s \in [0,1]\cap\mathbb{R}\backslash\mathbb{Q},
		\end{cases}\\
		&= \begin{cases}
			0, & s \in [0,1]\cap\mathbb{Q},\\
			\ln(4), & s \in [0,1]\cap\mathbb{R}\backslash\mathbb{Q}.
		\end{cases}
	\end{align*}
	
	As the limit is not same for all values of $s > 0$, we conclude that $f^l$ is not directional differentiable at $I_2$ in the direction $V$.
	
	Similarly, the limit of the slope function of secants to $f^u$ at $P=I_2 \in E$ in the direction $V=\ln(2)I_2 \in T_P(S^2_{++})$, turns out to be
	\begin{align*}
		(f^u)'(P;V) = \begin{cases}
			\ln(4), & s \in [0,1]\cap\mathbb{Q},\\
			0, & s \in [0,1]\cap\mathbb{R}\backslash\mathbb{Q},
		\end{cases}
	\end{align*}
	which shows, $f^u$ is also not directional differentiable at $I_2$ in the direction $V$.
	
	But, the gH-directional derivative of $f$ at $P=I_2$ in the direction $V=\ln(2)I_2 \in T_P(E)$ exists, as is evident from the following
	\begin{align*}
		\frac{f(\gamma(s) \ominus_g f(\gamma(P)))}{s} 
		&= \frac{f(\gamma(s))\ominus_g f(\gamma(0))}{s}\\
		&= \frac{1}{s}\big[\text{min}\{f^l(\gamma(s))-f^l(I), f^u(\gamma(s))-f^u(I)\},\\ &~~~~~~~~~~~~~~\text{max}\{f^l(\gamma(s))-f^l(I), f^u(\gamma(s))-f^u(I)\}\big]\\
		&=\bigg[\text{min}\bigg\{\frac{f^l(\gamma(s))-f^l(\gamma(0))}{s}, \frac{f^u(\gamma(s))-f^u(\gamma(0))}{s}\bigg\}, \\
		& ~~~~~~\text{max}\bigg\{\frac{f^l(\gamma(s))-f^l(\gamma(0))}{s}, \frac{f^u(\gamma(s))-f^u(\gamma(0))}{s} \bigg\} \bigg].
	\end{align*}
	For sufficiently small $s > 0$ (or $0<s<\frac{1}{\ln(4)}$), we have 
	$$\frac{f(\gamma(s) \ominus_g f(\gamma(P)))}{s}= \begin{cases}
		\big[0, \frac{4^s - 1}{s}\big], & s \in [0,1]\cap\mathbb{Q},\\
		[s(\ln(4))^2,~ \ln(4)], & s \in [0,1]\cap\mathbb{R}\backslash\mathbb{Q},
	\end{cases}$$
		$$\implies ~ f'(P;V)= \lim\limits_{s \rightarrow 0^+} \frac{f(\gamma(s) \ominus_g f(\gamma(P)))}{s} = [0, \ln(4)].$$
	Thus, the necessary part of the Lemma 3.2 in \cite{chen} is not true. 
\end{example}

%\bibliographystyle{plain}
%\bibliography{bibliography}

\end{document}